\theoremstyle{plain}
\newtheorem{theorem}{Theorem}
\newtheorem{corollary}[theorem]{Corollary}
\newtheorem{lemma}[theorem]{Lemma}
\newtheorem{proposition}[theorem]{Proposition}
\theoremstyle{definition}
\newtheorem{definition}[theorem]{Definition}
\newcommand{\Rn}{\mathbb{R}^n}
\newcommand{\essi}{\operatornamewithlimits{ess\,inf}}
\newcommand{\esss}{\operatornamewithlimits{ess\,sup}}
\newcommand{\al}{\alpha}
\newcommand{\bt}{\beta}
\newcommand{\dl}{\delta}
\newcommand{\lb}{\lambda}
\newcommand{\ve}{\varepsilon}
\newcommand{\gm}{\gamma}
\newcommand{\Gm}{\Gamma}
\newcommand{\sg}{\sigma}
\newcommand{\vi}{\varphi}
\newcommand{\rn}{\mathbb{R}^n}
\newcommand{\intl}{\int\limits}
\newcommand{\vs}{\vspace}
\newcommand{\me}{\mathrm{e}}
\title[Characterization of the variable exponent Bessel \ldots]{Characterization of the variable exponent Bessel potential spaces\\ via the Poisson semigroup}
\author[H. Rafeiro]{Humberto Rafeiro$^1$}
\address{Universidade do Algarve\\ Faculdade de Ci\^encias e Tecnologia\\ Campus de Gambelas, 8000-117 Faro -- PORTUGAL}
\thanks{$^1$ Supported by \textsl{Funda\c c\~ao para a Ci\^encia e a Tecnologia} (FCT),  Grant No. SFRH/BD/22977/2005, Portugal.}
\email{hrafeiro@ualg.pt}
\urladdr{http://w3.ualg.pt/~hrafeiro}
\author[S. Samko]{Stefan Samko$^2$}
\thanks{$^2$ Corresponding author.}
\address{Universidade do Algarve\\ Faculdade de Ci\^encias e Tecnologia\\ Campus de Gambelas, 8000-117 Faro -- PORTUGAL}
\email{ssamko@ualg.pt}
\urladdr{http://w3.ualg.pt/~ssamko}
\keywords{Riesz fractional derivative, Riesz potential operator, Grunwald-Letnikov approach, hypersingular integral, Bessel potential space}
\subjclass[2000]{46E30, 47B38}
\begin{document}
\begin{abstract}

Under the standard assumptions on the variable exponent  $p(x)$ (log- and
decay conditions), we give a characterization of the variable exponent Bessel
potential space $\mathfrak B^\alpha\left[L^{p(\cdot)}(\mathbb R^n)\right]$ in
terms of the rate of convergence of the Poisson semigroup $P_t$. We show that
the existence of the Riesz fractional derivative $\mathbb{D}^\al f$ in the
space $L^{p(\cdot)}(\rn)$ is equivalent to the existence of the limit
$\frac{1}{\ve^\al}(I-P_\ve)^\al f$. In the pre-limiting case $\sup_x
p(x)<\frac{n}{\al}$ we show that the Bessel potential space  is characterized
by the condition $\|(I-P_\ve)^\al f\|_{p(\cdot)}\leqq C \ve^\al$.

\end{abstract}
\maketitle

\section{Introduction}

The Bessel potential space $\mathfrak B^\alpha\left[L^{p(\cdot)}(\mathbb
R^n)\right],\ \alpha> 0,$  defined as the  range of the Bessel potential
operator over the variable exponent Lebesgue spaces  $L^{p(\cdot)}(\mathbb
R^n),$ was recently studied in \cite{alsamk}, under assumptions on $p(x)$
typical for \textsl{variable exponent analysis}, where in particular it was
shown that the space $\mathfrak B^\alpha\left[L^{p(\cdot)}(\mathbb
R^n)\right]$ may be characterized as the Sobolev space
\begin{equation}\label{inicio}
 L^{\alpha,p(\cdot)}(\mathbb R^n) = \left\{ f\in L^{p(\cdot)}: \;
\mathbb{D}^\alpha f \in L^{p(\cdot)} \right\},
\end{equation}
 with the Riesz fractional
derivative $\mathbb{D}^\alpha f $ realized  as a hypersingular integral, the
justification of the coincidence $\mathfrak
B^\alpha\left[L^{p(\cdot)}(\mathbb R^n)\right]=L^{\alpha,p(\cdot)}(\mathbb
R^n)$ being given in \cite{alsamk} in the ``under-limiting" case
$\sup_{x\in\mathbb{R}^n} p(x)<\frac{n}{\al}$. In \cite{alsamk}, in the case
of integer $\al$, it was also verified that  $\mathfrak
B^\alpha\left[L^{p(\cdot)}(\mathbb R^n)\right]$ coincides with the standard
Sobolev space, defined in terms of partial derivatives, the same having been
also checked in \cite{gurharnek}.

  In the case of constant $p$ it was also known that the Bessel potential space
$\mathfrak B^\alpha\left[L^{p(\cdot)}(\mathbb R^n)\right]$ may be
characterized in terms of the rate of convergence of identity approximations.
For instance, with the usage of the Poisson semigroup $P_t, t>0$, the space
$\mathfrak B^\alpha\left[L^{p}(\mathbb R^n)\right]$ is described as the
subspace of $L^{p}(\mathbb R^n)$ of functions $f$ for which there exists the
limit $\lim\limits_{t\to 0}\frac{1}{t^\al}(I-P_t)^\al f$, besides this
\begin{equation}\label{00}
\lim\limits_{\substack{t\to 0 \\ (L^p)}}\frac{1}{t^\al}(I-P_t)^\al f=\mathbb{D}^\al f
\end{equation}
 see for instance
Theorem B in \cite{hypsam}, where the simultaneous existence of the left- and
right-hand sides in \eqref{00} and their coincidence  was proved under the
assumption that $f$ and $ \mathbb{D}^\al f$ may belong to $L^r(\mathbb{R}^n)$
and $L^p(\mathbb{R}^n)$ with different $p$ and $r$. In the case $p=r$ this
was
 proved in \cite{520z} where the case of the Weierstrass
semigroup was also considered. Relations of  type \eqref{00} go back to
Westphal's formula \cite{704}
$$(-A)^\al f = \lim\limits_{h\to 0}\frac{1}{h^\al}(I-T_h)^\al f $$
for fractional powers  of the generator of a semigroup $T_h$ in a Banach
space. The latter in its turn has the origin in the Gr\"unwald-Letnikov
approach (\cite{gru,let}) to fractional derivatives of functions of one
variable,  under which the fractional derivative is defined as $
  \lim_{h\to 0+} \frac{\Delta_h^\alpha f}{h^\alpha},
$ where $\Delta_h^\alpha f$ is the difference of fractional order $\alpha>0$.

What is now called \textsl{variable exponent analysis} (VEA) was
intensively developed during the last two decades,  variable
exponent Lebesgue spaces $L^{p(\cdot)}(\mathbb{R}^n)$ being the
core of VEA. The progress in VEA was inspired both by difficult
open problems in this theory, and possible applications shown in
\cite{525}. Not going here into historical details, we refer to
original papers \cite{618a,332} and  surveying papers \cite{106b,
316b, 321n, 580bd}. As is known,  extension of various facts valid
for constant $p$ to the case of variable $p(x)$ encountered
enormous difficulties and required essential efforts from various
groups of researchers. Among the reasons we could remind that
variable exponent spaces are not invariant with respect to
translations and dilations, Young theorem for convolution
operators is no more valid, the Minkowsky integral inequality
proves to be a very rough inequality, etc.

Although expected, the validity of  \eqref{00} in the variable exponent
setting was not easy to justify, in particular because the apparatus of the
Wiener algebra of Fourier transforms of integrable function, based on  the
Young theorem, is not applicable. Another natural approach, based on Fourier
multipliers, extended in \cite{101ab} to the variable exponent setting, may
be already applicable, which is used in this paper. However,  because of the
specific behaviour of the Bessel functions appearing under the usage of the
Mikhlin-H\"ormander theorem, this approach also encountered essential
difficulties, see Section \ref{crucial} and Subsection \ref{added}.

The paper is arranged as follows.  In Section \ref{sec2} we provide some
necessary preliminaries. Section \ref{mainresults} contains formulations of
the main results of the paper, see Theorems \ref{main}, \ref{main2},
\ref{main1} and Corollary \ref{cor}. In Section \ref{crucial} we prove some
important technical lemmas and in  Section \ref{quatro} we give the proofs of
the main results. In particular in Subsection \ref{added} we show that some
specific functions are Fourier $p(x)$-multipliers, which required the most
efforts. The result on these Fourier multipliers is then used in Subsections
\ref{smain}-\ref{smain1} to obtain the characterization of Bessel potential
type spaces in terms of the rate of convergence of the Poisson semigroup.

\section{Preliminaries}\label{sec2}

 We refer to papers
\cite{618a, 332,  575a} and surveys \cite{106b, 316b, 580bd}   for
details on variable Lebesgue spaces, but  give some necessary
definitions. For a measurable function $p:\mathbb{R}^n \rightarrow
[1,\infty)$ we put $$p_+:= \esss_{x \in \mathbb{R}^n} p(x) \;\; \
\ \textrm{and}\;\;\ \
  p_-:=\essi_{x \in \mathbb{R}^n} p(x).$$
The variable exponent Lebesgue space $L^{p(\cdot)}(\mathbb{R}^n)$
is  the set of functions for which
$$\varrho_{p}(f):=\int_{\mathbb{R}^n} |f(x)|^{p(x)}\;d x<\infty .$$
In the sequel,  we suppose that $p(x)$ satisfies  one of the following standard conditions:
\begin{equation}\label{boundsnn}
1 \leqq p_{-}\leqq p(x)\leqq p_{+}<\infty,
\end{equation}
or
\begin{equation}\label{bounds}
1<p_-\leqq p(x) \leqq p_+<\infty.
\end{equation}
Equipped with the norm
$$\|f \|_{p(\cdot)}:=\inf \left\{\lambda>0:\varrho_{p}\left(
\frac{f}{\lambda} \right) \leqq 1 \right\},$$ this is a Banach
space.  By $p^\prime(x)$ we denote the conjugate exponent:
$\frac{1}{p(x)}+\frac{1}{p^\prime(x)} \equiv 1.$ We make use of
the well-known log-condition
\begin{equation}\label{llc}
|p(x)-p(y) |\leqq \frac{C}{-\ln \left(|x-y|\right)}, \qquad
|x-y|\leqq \frac{1}{2},\; x,y\in \mathbb{R}^n
\end{equation}
and  assume that there exists $\displaystyle p(\infty)=\lim_{x\rightarrow
\infty} p(x)$  and there holds the decay condition
\begin{equation}\label{decaycondition2}
|p(x)-p(\infty)|\leqq \frac{A}{\ln (2+|x|)},\qquad x\in
\mathbb{R}^n.
\end{equation}
\begin{definition}\label{def2} By $\mathcal{P}(\mathbb{R}^n)$ we denote the set of all bounded measurable functions $p:\mathbb{R}^n
\rightarrow [1,\infty)$ which satisfy assumptions \eqref{bounds},
\eqref{llc} and \eqref{decaycondition2}.
\end{definition}

\begin{definition}
By $\mathfrak M(\mathbb{R}^n)$ we denote the set of exponents
$p(\cdot):\mathbb{R}^n\to(1,\infty)$  such that the Hardy-Littlewood maximal
operator is bounded in the space $L^{p(\cdot)}(\Rn)$. As is  known
\cite{101ab}, $\mathcal{P}(\mathbb{R}^n)\subset \mathfrak M(\mathbb{R}^n)$.
\end{definition}

\subsection{Identity approximations}\label{identityappr}

Let $\phi\in L^1(\Rn)$ and  $\int_{\mathbb R^n} \phi(x)\;d x =1$. For each
$t>0$, we put $\phi_t:=t^{-n}\phi(xt^{-1})$. Following \cite{urifio07}, we
say that $\{\phi_t \}$ is a potential-type approximate identity, if it has
integrable radial majorant $$\sup_{|y|\geq|x|} |\phi(y)|\in L^1(\Rn).$$
Convergence of potential-type approximate identities in the setting of
variable exponent Lebesgue spaces $L^{p(\cdot)}$ was known from \cite{106}
under the assumption that the maximal operator is bounded. (An extension  to
some weighted spaces was given  in \cite{505z}). The following Proposition
\ref{urifio} proved in  \cite[Theorem 2.3]{urifio07}, does not use the
information about the maximal operator and allows to include the cases where
$p(x)$ may be equal to $1$.

\begin{proposition}\label{urifio} Let
a function $p: \mathbb{R}^n\to [1,\infty)$ satisfy conditions
\eqref{boundsnn}, \eqref{llc} and \eqref{decaycondition2}. If
$\{\phi_t\}$ is a potential-type approximate identity then

\vspace{3mm}

$\mathrm{i)} \hspace{3mm}\| \phi_t \ast f   \|_{p(\cdot)}  \leqq C \| f   \|_{p(\cdot)}, $

 \vspace{4mm} \noindent for all $t>0$, with $C>0$ not depending on $t$ and $f$, and

\vspace{4mm} $\mathrm{ii)}\hspace{2mm} \displaystyle \lim_{t \to
0} \| \phi_t \ast f -f \|_{p(\cdot)}=0 , \ \quad \ \ f \in
L^{p(\cdot)}(\mathbb{R}^n).$

\end{proposition}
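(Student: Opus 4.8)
The plan is to establish the boundedness (i) first, by estimating the modular $\varrho_{p}$ directly, and then to deduce the convergence (ii) from (i) by reducing, via density, to smooth compactly supported functions. For (i): since the inequality is positively homogeneous in $f$, it suffices to show that $\varrho_{p}(\phi_t\ast f)\le C$ for all $t>0$ whenever $\varrho_{p}(f)\le1$, with $C$ independent of $t$ and $f$. Let $\psi(x):=\sup_{|y|\ge|x|}|\phi(y)|$ be the radially decreasing integrable majorant and $\va:=\|\psi\|_{L^1(\rn)}$; note $\va\ge\|\phi\|_{L^1}\ge1$ and $|\phi_t\ast f(x)|\le(\psi_t\ast|f|)(x)$ pointwise. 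Jensen's inequality, applied with the probability measure $\va^{-1}\psi_t(x-y)\,dy$, gives
$$
|\phi_t\ast f(x)|^{p(x)}\le\va^{\,p(x)-1}\int_{\rn}|f(y)|^{p(x)}\psi_t(x-y)\,dy\le\va^{\,p_+-1}\int_{\rn}|f(y)|^{p(x)}\psi_t(x-y)\,dy ,
$$
so everything reduces to bounding $J_t:=\int_{\rn}\int_{\rn}|f(y)|^{p(x)}\psi_t(x-y)\,dy\,dx$ uniformly in $t$.

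\emph{The technical core.} The point is to replace the ``wrong'' exponent $p(x)$ in $J_t$ by the ``right'' one $p(y)$. Write $f=f_1+f_2$ with $f_1=f\chi_{\{|f|>1\}}$ and $f_2=f\chi_{\{|f|\le1\}}$; by convexity of $\varrho_{p}$ it suffices to estimate the two contributions separately. For $f_2$ one partitions $\rn\times\rn$ into the near-diagonal part $|x-y|\le\tfrac12$, where the log-condition \eqref{llc} controls $p(x)-p(y)$, and the far part $|x-y|>\tfrac12$, handled by the decay condition \eqref{decaycondition2} together with a careful discussion of the relative sizes of $|x|$, $|y|$ and $|x-y|$; on each piece one distinguishes whether $|f_2(y)|$ exceeds a threshold (of the shape $|x-y|^{c}$ near the diagonal, and a negative power of $\me+|y|$ overall). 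Above the threshold the factor $|f_2(y)|^{p(x)-p(y)}$ is bounded by an absolute constant, so $|f_2(y)|^{p(x)}\le C|f_2(y)|^{p(y)}$; below it $|f_2(y)|^{p(x)}\le|f_2(y)|^{p_-}$ is dominated by a fixed function in $L^1(\rn)$, independent of $f$ and $t$. Integrating and using Fubini with $\int_{\rn}\psi_t(x-y)\,dx=\va$, the first type of term contributes at most $C\va\,\varrho_{p}(f_2)\le C\va$ and the second at most $C\va$. The contribution of $f_1$, for which this comparison fails (the ratio $|f_1|^{p(x)-p(y)}$ may be unbounded), is dealt with by a separate, more delicate argument exploiting that $\{|f|>1\}$ carries little modular mass, e.g. through a level-set decomposition of $f_1$. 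Altogether $J_t\le C$, which proves (i). This exponent-exchange is precisely the main obstacle: one must absorb all error terms into a single $L^1$-majorant, uniformly in $t$ although the kernels $\psi_t$ concentrate at the origin, and --- since $p_-=1$ is permitted --- without recourse to boundedness of the Hardy--Littlewood maximal operator; arranging the near-diagonal threshold to be at once small (to tame $|f_2(y)|^{p(x)-p(y)}$ via \eqref{llc}) and spatially localized, and resolving the far-diagonal geometry, is the delicate part.

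\emph{Proof of (ii).} By (i) the maps $f\mapsto\phi_t\ast f-f$ are uniformly bounded on $L^{p(\cdot)}(\rn)$, so it is enough to prove $\|\phi_t\ast g-g\|_{p(\cdot)}\to0$ for $g$ in the dense class $C_0^\infty(\rn)$ and then finish by an $\ve/3$ argument. Fix $g$ with $\operatorname{supp}g\sbs B_r$. On $B_{2r}$ one has $\phi_t\ast g\to g$ uniformly (by uniform continuity of $g$ and $\phi\in L^1$), hence $\|(\phi_t\ast g-g)\chi_{B_{2r}}\|_{p(\cdot)}\to0$ since $|B_{2r}|<\infty$. On $\rn\bs B_{2r}$ we have $g\equiv0$ and $|\phi_t\ast g(x)|\le\|g\|_{L^1}\,t^{-n}\psi\!\bigl(\tfrac{|x|}{2t}\bigr)$ for $|x|\ge2r$; since $\psi$ is radially decreasing and integrable, $|w|^n\psi(w)\to0$ as $|w|\to\infty$, so $\sup_{|x|\ge2r}t^{-n}\psi\!\bigl(\tfrac{|x|}{2t}\bigr)\to0$, and a short computation (treating $p_-=1$ and $p_->1$ separately) gives $\|(\phi_t\ast g)\chi_{\rn\bs B_{2r}}\|_{L^{p_-}(\rn)}\to0$. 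Since $0\le a\le1$ implies $a^{p(x)}\le a^{p_-}$, it follows that $\varrho_{p}\bigl((\phi_t\ast g)\chi_{\rn\bs B_{2r}}\bigr)\to0$, hence the corresponding $L^{p(\cdot)}$-norm tends to $0$. This proves (ii).
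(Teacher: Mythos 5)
The paper does not supply its own proof of this proposition: it is quoted verbatim from Cruz-Uribe and Fiorenza \cite[Theorem 2.3]{urifio07}, whose argument is indeed a direct modular estimate of the kind you outline (Jensen's inequality against the radial majorant, followed by an ``exponent-exchange'' $|f(y)|^{p(x)}\leftrightarrow|f(y)|^{p(y)}$ using the log- and decay conditions, with small/large and near/far splittings). So your general strategy for (i) is the correct one, and your proof of (ii) -- reducing via the uniform bound to $g\in C_0^\infty$, handling $B_{2r}$ by uniform convergence and the exterior by the $L^1$-majorant estimate $t^{-n}\psi(|x|/(2t))$ -- is complete and correct.

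However, there is a genuine gap in part (i). You explicitly set aside the contribution of $f_1=f\chi_{\{|f|>1\}}$, writing that it ``is dealt with by a separate, more delicate argument exploiting that $\{|f|>1\}$ carries little modular mass, e.g.\ through a level-set decomposition of $f_1$,'' and then conclude $J_t\leqq C$. That deferred step is precisely the substance of the proposition. The threshold mechanism you describe works for $|f_2|\leqq 1$ because below the threshold $|f_2(y)|^{p(x)}\leqq|f_2(y)|^{p_-}\leqq(\text{threshold})^{p_-}$ is dominated by a fixed $L^1$ function; for $|f_1(y)|>1$ the same maneuver gives $|f_1(y)|^{p(x)}\leqq|f_1(y)|^{p_+}$, which is \emph{not} controlled by $|f_1(y)|^{p(y)}$, and the factor $|f_1(y)|^{p(x)-p(y)}$ is unbounded above whenever $p(x)>p(y)$, no matter how small $|x-y|$ is. Closing this case -- obtaining an estimate that is uniform in $t$ even though $\psi_t$ concentrates at the origin, and without the maximal operator since $p_-=1$ is allowed -- is exactly the content of \cite[Lemma 2.1]{urifio07}. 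As written, your argument asserts rather than proves that the $f_1$ part is controlled, so part (i) is not established.
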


\subsection{Fourier $p(x)$-multipliers}

 Let $m\in L^\infty(\mathbb R^n)$. We
define the operator $T_m$ by
\begin{equation}\label{multiplier}
\widehat {T_m f}(\xi)=m(\xi)\widehat f(\xi).
\end{equation}
When $T_m$ generates a bounded operator on $L^{p(\cdot)}(\mathbb R^n)$, we
say that $m$ is a \textsl{Fourier $p(\cdot)$-multiplier}. The following
Mikhlin-type multiplier theorem for variable Lebesgue spaces is known, see
\cite[Theorem 4.5]{koksam} where it was proved in a weighted setting; note
that a similar theorem in the form of H\"ormander criterion (for variable
exponents proved in \cite[Section 2.5]{uribe}) requires  to check the
behaviour of less number of derivatives (up to order
$\left[\frac{n}{2}\right]+1$), but leads to stronger restrictions on $p(x)$.

\begin{theorem}\label{mihhor1} Let a function $m(x)$ be continuous everywhere
in $\mathbb R^n$, except for probably the origin, have the mixed
distributional derivative $\frac{\partial^n m}{\partial x_1\partial x_2\cdots
\partial x_n}$ and the derivatives $D^\alpha m=\frac{\partial^{|\alpha|} m
}{\partial x_1^{\alpha_1}\partial x_2^{\alpha_2} \cdots \partial
x_m^{\alpha_m}}$, $\alpha=(\alpha_1,\ldots, \alpha_n)$ of orders
$|\alpha|=\alpha_1+\cdots+\alpha_n \leqq n-1$ continuous beyond the origin
and \begin{equation}\label{mih1} |x|^{|\alpha|} |D^\alpha m(x)|\leqq C,\quad
|\alpha|\leqq n \end{equation} where the constant $C > 0$ does not depend on
$x$.  If $p$ satisfies condition \eqref{bounds} and $p \in \mathfrak
M(\mathbb R^n)$, then $m$ is a Fourier $p(\cdot)$-multiplier in $L^{p(\cdot)}
(\mathbb R^n )$.
\end{theorem}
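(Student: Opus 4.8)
The plan is to deduce Theorem~\ref{mihhor1} from the classical weighted theory of Fourier multipliers together with Rubio de Francia extrapolation in variable exponent Lebesgue spaces, which is in essence the route followed in \cite{koksam,uribe}. \textbf{Step 1: from the symbol to a weighted norm inequality.} Since $m\in L^\infty(\mathbb R^n)$, Plancherel's theorem gives that $T_m$ is bounded on $L^2(\mathbb R^n)$. Writing $m=\sum_{j\in\mathbb Z}m(\xi)\,\psi(2^{-j}\xi)$ with a fixed smooth dyadic partition of unity $\psi$ supported in an annulus $\{1/2\leqq|\xi|\leqq 2\}$, one forms the kernel pieces $K_j=\mathcal F^{-1}\big(m\,\psi(2^{-j}\cdot)\big)$ and estimates them and their gradients by repeated integration by parts, each step consuming one of the derivatives $D^\alpha m$ whose bound is furnished by \eqref{mih1}; summing over $j$ one finds that $T_m$ is a convolution Calder\'on--Zygmund operator, i.e.\ its kernel satisfies the H\"ormander regularity condition $\int_{|x|\geqq 2|y|}|K(x-y)-K(x)|\,dx\leqq C$. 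Equivalently, \eqref{mih1} implies the averaged H\"ormander symbol condition used by Kurtz and Wheeden. In either formulation one obtains, by Coifman--Fefferman (or directly by Kurtz--Wheeden), the weighted bounds $\|T_m f\|_{L^{p_0}(w)}\leqq C\|f\|_{L^{p_0}(w)}$ for every $p_0\in(1,\infty)$ and every Muckenhoupt weight $w\in A_{p_0}$.

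\textbf{Step 2: extrapolation to $L^{p(\cdot)}$.} By hypothesis $p\in\mathfrak M(\mathbb R^n)$, i.e.\ the Hardy--Littlewood maximal operator is bounded on $L^{p(\cdot)}(\mathbb R^n)$; since $1<p_-\leqq p_+<\infty$ by \eqref{bounds}, a known self-improvement property of $\mathfrak M$ (see \cite{101ab}) gives boundedness of the maximal operator on $L^{p'(\cdot)}(\mathbb R^n)$ as well. These are precisely the hypotheses under which the Rubio de Francia extrapolation theorem for variable Lebesgue spaces applies: an operator bounded on $L^{p_0}(\mathbb R^n,w)$ for all $w\in A_{p_0}$ --- take $p_0=2$ from Step~1 --- is bounded on $L^{q(\cdot)}(\mathbb R^n)$ for every exponent $q(\cdot)$ with $q(\cdot),q'(\cdot)\in\mathfrak M(\mathbb R^n)$. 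Applying this to $T_m$ with $q(\cdot)=p(\cdot)$ yields $\|T_m f\|_{p(\cdot)}\leqq C\|f\|_{p(\cdot)}$, i.e.\ $m$ is a Fourier $p(\cdot)$-multiplier.

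I expect the main obstacle to be Step~1: converting pointwise control of finitely many derivatives of the symbol into a genuine singular-integral/weighted estimate, where one must verify that precisely the derivatives assumed available --- all $D^\alpha m$ with $|\alpha|\leqq n-1$ together with the single mixed derivative $\partial_1\cdots\partial_n m$ --- are what the dyadic integration-by-parts bookkeeping in $n$ variables actually consumes. Step~2 is then a black-box citation whose only delicate point is the implication $p\in\mathfrak M\Rightarrow p'\in\mathfrak M$ under \eqref{bounds}, quoted from \cite{101ab}. One can also bypass extrapolation and invoke directly the boundedness of Calder\'on--Zygmund operators on $L^{p(\cdot)}$ for $p\in\mathfrak M$, as done in \cite{koksam,uribe}; this merely relocates, rather than removes, the work in Step~1.
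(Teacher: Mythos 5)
The paper does not prove Theorem~\ref{mihhor1} at all --- it is quoted verbatim from \cite[Theorem 4.5]{koksam}, where (as the text remarks) it is established in a weighted $L^{p(\cdot)}$ setting --- so there is no internal argument to compare yours against. Your reconstruction is the standard and, as far as I can judge, correct route: the pointwise Mikhlin bounds on $n$ derivatives yield, via a dyadic decomposition and integration by parts, a convolution Calder\'on--Zygmund kernel; Kurtz--Wheeden (or Coifman--Fefferman) then gives $\|T_mf\|_{L^{p_0}(w)}\leqq C\|f\|_{L^{p_0}(w)}$ for every $p_0\in(1,\infty)$ and $w\in A_{p_0}$; and Rubio de Francia extrapolation for variable exponents transfers this to $L^{p(\cdot)}$. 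This matches the spirit of both \cite{koksam} and \cite{uribe}, and correctly explains why the theorem needs $n$ derivatives (to reach the full $A_p$ scale of weights) whereas the H\"ormander version in \cite{uribe} can make do with $[n/2]+1$ at the price of stronger hypotheses on $p(\cdot)$.

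One citation, however, is misattributed and conceals a nontrivial ingredient. You invoke ``a known self-improvement property of $\mathfrak M$ (see \cite{101ab})'' to pass from $p\in\mathfrak M(\mathbb R^n)$ to $p'\in\mathfrak M(\mathbb R^n)$. The reference \cite{101ab} (Cruz-Uribe--Fiorenza--Neugebauer) proves the inclusion $\mathcal P(\mathbb R^n)\subset\mathfrak M(\mathbb R^n)$, i.e.\ that the log- and decay conditions imply boundedness of the maximal operator; it does \emph{not} prove the duality statement $p\in\mathfrak M\Rightarrow p'\in\mathfrak M$, nor the closely related left-openness of $\mathfrak M$. Both of those are deep results of L.~Diening (Bull.\ Sci.\ Math.\ 129 (2005), 657--700), valid under $p_+<\infty$, and they are precisely what makes the extrapolation step run under the weak hypothesis $p\in\mathfrak M$ rather than under $p\in\mathcal P$. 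If you instead cite the variable-exponent extrapolation theorem in the form that only asks for $M$ to be bounded on $L^{p(\cdot)}$ (as in the Diening--Harjulehto--H\"ast\"o--R\r{u}\v{z}i\v{c}ka monograph), note that Diening's duality theorem has simply been absorbed into that statement; it cannot be avoided. With that reference corrected, the argument is sound.
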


It is easily seen that  Mikhlin condition \eqref{mih1} for radial
functions $\mathcal{M}(r)=m(|x|)$ is reduced to
\begin{equation}\label{reduced}
\left|r^k \frac{d ^k}{d r^k}\mathcal{M}(r)\right|\leqq C<\infty, \quad \
k=0,1,\ldots, n.
\end{equation}
Note that \eqref{reduced} is equivalent to
\begin{equation}\label{reducedequiv}
\left|\left(r \frac{d}{d r}\right)^k\mathcal{M}(r)\right|\leqq C<\infty,
\quad \ k=0,1,\ldots, n,
\end{equation}
since $\left(r\frac{d}{d r}\right)^k=\sum\limits_{j=1}^k C_{k,j}r^j
\frac{d^j}{d r^j}$ with constant $C_{k,j}$, where $C_{k,1}=C_{k,k}=1$.

\begin{lemma}\label{dilation}
  Let a function $m$ satisfy Mikhlin's  condition \eqref{mih1}. Then the function
  $m_\varepsilon(x):=m(\varepsilon x)$ satisfies \eqref{mih1} uniformly in
$\ve$, with the same constant $C.$
  \end{lemma}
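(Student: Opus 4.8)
The plan is to reduce everything to the chain rule for the dilation $x\mapsto\varepsilon x$ and then simply re-read Mikhlin's inequality \eqref{mih1} at the dilated point. First I would record that for a fixed multi-index $\alpha=(\alpha_1,\ldots,\alpha_n)$ the ordinary chain rule gives
\begin{equation*}
D^\alpha m_\varepsilon(x)=\varepsilon^{|\alpha|}(D^\alpha m)(\varepsilon x),
\end{equation*}
and likewise $\frac{\partial^n m_\varepsilon}{\partial x_1\cdots\partial x_n}(x)=\varepsilon^{n}\frac{\partial^n m}{\partial x_1\cdots\partial x_n}(\varepsilon x)$. Since $x\mapsto\varepsilon x$ is, for each fixed $\varepsilon>0$, a $C^\infty$-diffeomorphism of $\mathbb{R}^n$ fixing the origin, the continuity requirements on $m$ beyond the origin and the existence of the mixed distributional derivative are inherited by $m_\varepsilon$ without change, so the only point to verify is the quantitative bound.

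Next I would plug this identity into \eqref{mih1}. For $|\alpha|\leqq n$ and $x\neq 0$,
\begin{equation*}
|x|^{|\alpha|}\,|D^\alpha m_\varepsilon(x)|=|x|^{|\alpha|}\,\varepsilon^{|\alpha|}\,|(D^\alpha m)(\varepsilon x)|=|\varepsilon x|^{|\alpha|}\,|(D^\alpha m)(\varepsilon x)|.
\end{equation*}
Now I would observe that as $x$ runs over $\mathbb{R}^n\setminus\{0\}$ the point $y=\varepsilon x$ also runs over $\mathbb{R}^n\setminus\{0\}$, whence the right-hand side equals $|y|^{|\alpha|}|(D^\alpha m)(y)|$, which by hypothesis \eqref{mih1} applied to $m$ itself is bounded by $C$. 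Since this $C$ depends only on $m$ and not on $y$, it depends neither on $x$ nor on $\varepsilon$; this is precisely \eqref{mih1} for $m_\varepsilon$ with the same constant, uniformly in $\varepsilon$.

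There is no genuine obstacle here; the only thing to watch is the bookkeeping of the scaling exponents, namely that each differentiation with respect to a coordinate of $x$ produces exactly one factor of $\varepsilon$, so that the total factor $\varepsilon^{|\alpha|}$ recombines with $|x|^{|\alpha|}$ into $|\varepsilon x|^{|\alpha|}$ rather than into a mismatched power. For the radial reformulations \eqref{reduced} and \eqref{reducedequiv} the same remark applies verbatim, with $\mathcal{M}_\varepsilon(r)=\mathcal{M}(\varepsilon r)$ and the operator $\left(r\frac{d}{dr}\right)^{k}$ being scale-invariant, which renders the uniformity in $\varepsilon$ even more transparent.
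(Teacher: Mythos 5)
Your proof is correct and follows exactly the same route as the paper's one-line argument: apply the chain rule to get $D^\alpha m_\varepsilon(x)=\varepsilon^{|\alpha|}(D^\alpha m)(\varepsilon x)$, recombine $|x|^{|\alpha|}\varepsilon^{|\alpha|}=|\varepsilon x|^{|\alpha|}$, and read off the bound $C$ from \eqref{mih1} applied to $m$ at the point $\varepsilon x$. You have simply spelled out the bookkeeping that the paper leaves implicit.
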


The proof is obvious since $ D^\alpha
m(\ve\cdot)(\xi)=\ve^{|\alpha|}(D^\alpha m)(\ve\xi).$

\vspace{3mm}

We need the following lemma on the identity approximations. Note that in
Lemma \ref{12} no information on the kernel is required: the only requirement
is that its Fourier transform satisfies the Mikhlin multiplier condition.

\begin{lemma}\label{12}
  Suppose that $m(x)$  satisfies  Mikhlin's condition \eqref{mih1}.  If
\[
  \lim_{\varepsilon \to 0} m(\varepsilon x)=1
\]
for almost all $x \in \mathbb R^n$ and  $p\in\mathcal{P}(\mathbb R^n)$, then
\begin{equation}\label{appoximation}
  \lim_{\varepsilon \to 0} \|T_{\varepsilon} f - f\|_{p(\cdot)}=0,
\end{equation}
for all $f \in L^{p(\cdot)}(\mathbb R^n)$, where $T_{\ve}$ is the operator generated by the
multiplier $m(\varepsilon x)$.
\end{lemma}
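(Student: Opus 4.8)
The plan is to exploit the density of a convenient subspace together with the uniform boundedness coming from Theorem \ref{mihhor1} and Lemma \ref{dilation}. First I would observe that $T_\ve$ is the convolution-type operator with multiplier $m_\ve(x)=m(\ve x)$, and by Lemma \ref{dilation} the family $\{m_\ve\}_{\ve>0}$ satisfies Mikhlin's condition \eqref{mih1} uniformly in $\ve$ with the same constant $C$; since $p\in\cP(\rn)\sbs\mathfrak M(\rn)$, Theorem \ref{mihhor1} yields a uniform bound $\|T_\ve f\|_{\pc}\leqq C_0\|f\|_{\pc}$ for all $\ve>0$ and all $f\in\pcG[\rn]$, with $C_0$ independent of $\ve$ and $f$. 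Together with the trivial bound $\|I\|=1$, this gives $\|T_\ve-I\|_{\pc\to\pc}\leqq C_0+1$ uniformly in $\ve$. So by the Banach--Steinhaus philosophy it suffices to prove \eqref{appoximation} on a dense subset of $\pcG[\rn]$.

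Next I would choose the dense subset. A natural candidate is $\cR:=\{g\in\pcG[\rn]: \widehat g\in C_0^\infty(\rn\bs\{0\})\}$, or more simply $\widehat g\in C_0^\infty(\rn)$ with $\widehat g$ supported away from the origin; such functions are Schwartz functions and lie in $\pcG[\rn]$, and this set is dense in $\pcG[\rn]$ (density of $C_0^\infty$ in $\pcG[\rn]$ under condition \eqref{bounds} is standard in variable exponent analysis, and one can further localize the Fourier support). For such $g$ one has $T_\ve g - g = (m_\ve-1)\widehat g{}^{\,\vee}$, and I would estimate this directly. On the compact set $K=\operatorname{supp}\widehat g\sbs\rn\bs\{0\}$, the hypothesis $\lim_{\ve\to0}m(\ve x)=1$ for a.e. $x$ together with the uniform bound $|m(\ve x)|\leqq C$ (the case $|\al|=0$ in \eqref{mih1}) gives, by dominated convergence, $\|(m_\ve-1)\widehat g\|_{L^2}\to0$ and indeed $\|(m_\ve-1)\widehat g\|_{L^1}\to0$, hence $\|T_\ve g-g\|_{L^\infty}\to0$. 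To pass to the $\pc$-norm I would combine an $L^\infty$ decay with an $L^1$ (or $L^{p_-}$) bound and use the embedding-type inequality for $\pcG$ on the supports; more cleanly, since $(m_\ve-1)\widehat g\in C_0^\infty$ with supports controllable, its inverse Fourier transform is a Schwartz function whose every seminorm tends to $0$, which forces $\|T_\ve g-g\|_{\pc}\to0$ because $\pcG[\rn]$-norms of Schwartz functions are dominated by finitely many Schwartz seminorms (using the decay condition \eqref{decaycondition2} to handle the behaviour at infinity).

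Finally I would assemble the standard $3\ve$-argument: given $f\in\pcG[\rn]$ and $\eta>0$, pick $g\in\cR$ with $\|f-g\|_{\pc}<\eta$; then
\[
\|T_\ve f-f\|_{\pc}\leqq \|T_\ve(f-g)\|_{\pc}+\|T_\ve g-g\|_{\pc}+\|g-f\|_{\pc}\leqq (C_0+1)\eta+\|T_\ve g-g\|_{\pc},
\]
and letting $\ve\to0$ gives $\limsup_{\ve\to0}\|T_\ve f-f\|_{\pc}\leqq (C_0+1)\eta$; since $\eta$ is arbitrary, \eqref{appoximation} follows.

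The main obstacle I anticipate is the last step on the dense subset: controlling the $\pc$-norm of $T_\ve g-g$ rather than an $L^2$ or $L^\infty$ norm. The difficulty is genuinely that of variable exponent analysis — no Young inequality for convolutions, no translation invariance — so I must either (a) establish that on Schwartz functions the $\pcG[\rn]$-norm is bounded by a fixed finite combination of Schwartz seminorms (which requires the decay condition \eqref{decaycondition2} to control the tails, since $|g(x)|^{p(x)}$ must be integrable at infinity) and then show those seminorms of $T_\ve g-g$ vanish as $\ve\to0$, or (b) avoid Schwartz functions entirely and instead pick the dense subset to be $\pcG[\rn]\cap L^1(\rn)\cap L^\infty(\rn)$ and run the dominated convergence directly against the modular $\varrho_p$, using $|T_\ve g-g|\leqq 2\|T_\ve g-g\|_{L^\infty}\to0$ pointwise together with a fixed integrable-plus-bounded majorant to kill $\int_{\rn}|T_\ve g-g|^{p(x)}\,dx$. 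Option (b) is probably the cleanest and is the route I would actually carry out; the uniform multiplier bound from Lemma \ref{dilation} plus Theorem \ref{mihhor1} does all the remaining work.
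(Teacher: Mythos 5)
Your overall strategy—uniform boundedness of $\{T_\ve\}$ from Lemma~\ref{dilation} plus Theorem~\ref{mihhor1}, then a $3\eta$-argument on a dense subset—is exactly the scheme the paper uses. The gap is in the dense-subset step, and both of your options as written have real problems.

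Option (a): you want to take $\widehat g\in C_0^\infty$ supported away from the origin and conclude that $T_\ve g-g$ is a Schwartz function (``whose every seminorm tends to $0$''). But $m$ only satisfies the Mikhlin hypothesis~\eqref{mih1}, i.e.\ it has continuous derivatives up to order $n-1$ plus the mixed $n$th distributional derivative, with the prescribed bounds. It is \emph{not} infinitely smooth. Consequently $(m_\ve-1)\widehat g$ is not in $C_0^\infty$, and $T_\ve g-g$ is not Schwartz; its decay at infinity is only of order $|x|^{-n}$ (or $o(|x|^{-n})$ after Riemann--Lebesgue), which is borderline. So the ``dominated by finitely many Schwartz seminorms'' route breaks down at the first step. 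Option (b): for general $g\in L^{p(\cdot)}\cap L^1\cap L^\infty$, the Fourier transform $\widehat g$ is bounded and in $L^2$ but need not be in $L^1$, so $(m_\ve-1)\widehat g$ need not be integrable, and there is no a priori reason why $T_\ve g$ should even lie in $L^\infty$, let alone why $\|T_\ve g-g\|_{L^\infty}\to 0$. Even with additional restrictions ensuring $\widehat g\in L^1$ (hence $L^\infty$-convergence), you would still need a majorant $h$, independent of $\ve$, with $|T_\ve g(x)-g(x)|\leqq h(x)$ and $h^{p(x)}$ integrable; uniform boundedness in $L^\infty$ does not furnish the decay at infinity required for that, and you have not produced one.

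The paper's own argument sidesteps all of this: it takes the dense subset $L^{p_-}(\rn)\cap L^{p_+}(\rn)$ and uses the elementary norm inequality $\|u\|_{p(\cdot)}\leqq\|u\|_{p_-}+\|u\|_{p_+}$, which reduces the convergence $\|T_\ve f - f\|_{p(\cdot)}\to 0$ on the dense set directly to the already-known \emph{constant-exponent} version of the lemma (applied with exponents $p_-$ and $p_+$). That is much shorter, requires no regularity of $m$ beyond what the Mikhlin hypothesis already gives, and avoids the modular/majorant issues entirely. You should replace your dense-set argument with this reduction; the rest of your outline (uniform boundedness and the $3\eta$-estimate) is fine and coincides with the paper's proof.
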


\begin{proof}
  The statement of the lemma is well known in the case of  constant
$p\in(1,\infty)$, see
  \cite[Lemma 12]{hypsam}, being valid in this case for an arbitrary
Fourier $p$-multiplier $m$.
  By Lemma
\ref{dilation} and Theorem \ref{mihhor1},  the family of operators
$\{T_{\ve}\}$ is uniformly
bounded in $L^{p(\cdot)}(\mathbb R^n)$. Therefore, it suffices to check \eqref{appoximation} on a
dense set in $L^{p(\cdot)}(\mathbb R^n)$, for instance for $f\in L^{p-}(\mathbb R^n)\cap
L^{p_+}(\mathbb R^n)$. Since $
  \|f\|_{p(\cdot)} \leqq \|f\|_{p_-} +\|f\|_{p_+},
$ for such functions $f$ we have
\[
  \left \|T_\varepsilon f - f \right\|_{p(\cdot)} \leqq
\|T_\varepsilon f- f\|_{p_-} +\|T_\varepsilon f- f\|_{p_+}
\]
from which the conclusion follows, in view of the validity of the theorem in case of constant $p$.
\end{proof}

\subsection{On finite differences}\label{findif}

By a finite difference of integer order $\ell$ and step $h \in \mathbb R$, in
this paper we always mean a non-centered  difference
\begin{equation}
   \Delta_h^\ell f  (x) = (I-\tau_h)^\ell f(x)\\
=\sum_{j=0}^\ell (-1)^j \binom{\ell}{j}f(x-jh)
\end{equation}
where $I$ is the identity operator and $\tau_h f(x)=f(x-h)$ is the
translation operator. We refer to \cite[Chapter 3]{hypsambook} and
\cite[Sections 25-26]{samkilbook} for more information on centered
or non-centered finite differences and their role  in fractional
calculus and the theory of hypersingular integrals.

The {\sl difference of fractional order $\alpha$} is defined as
\begin{equation}\label{fracdiff}
   \Delta_h^\alpha f  (x)= (I-\tau_h)^\alpha f(x)
=\sum_{j=0}^\infty (-1)^j \binom{\alpha}{j}f(x-jh), \qquad \alpha >0,
\end{equation}
where the series converges absolutely and uniformly for each $\alpha >0$ and
for every bounded function $f$, which follows from the fact that $
  \sum_{j=1}^\infty \left| \binom{\alpha}{j} \right | < \infty
$, see for instance \cite[Subsection 20.1]{samkilbook},  for  properties of
fractional order differences.

In a similar way there is introduced a {\sl generalized difference of
fractional order $\alpha$}, if one  replaces the translation operator
$\tau_h$ by any semigroup of operators. In this paper we make use of the
Poisson semigroup
\begin{equation}\label{1.27}
  P_t f(x)=\int_{\mathbb R^n} P(x-y,t)f(y)\;d y
\end{equation}
where
$
 \displaystyle P(x,t)= \frac{c_n\,t}{(|x|^2+t^2)^\frac{n+1}{2}},\quad
c_n=\Gamma\left( \frac{n+1}{2}\right)\Big \slash
\pi^\frac{n+1}{2},
$
so that
\begin{equation}\label{2.5}
(I-P_t)^\alpha f(x)=\sum_{k=0}^\infty (-1)^k \binom{\alpha}{k}
P_{kt}f(x).
\end{equation}
The Poisson kernel $P(x,t)$ is a potential-type approximate identity in
accordance with the definition of Section \ref{identityappr}. Therefore, by
Proposition \ref{urifio} the operators $P_tf$ are uniformly bounded in the
space $L^{p(\cdot)}(\mathbb{R}^n)$ under the assumptions of that Proposition
on $p(\cdot)$. Then
\begin{equation}\label{2.6}
  \|(I-P_t)^\alpha f \|_{p(\cdot)}\leqq C\,c(\alpha) \|f
\|_{p(\cdot)}, \qquad c(\alpha)=\sum_{k=0}^\infty \left|
\binom{\alpha}{k}\right|<\infty,
\end{equation}
where $C$ is the constant from the uniform estimate
$\|P_tf\|_{p(\cdot)} \leqq C \|f\|_{p(\cdot)}$, when
 $p: \mathbb{R}^n\to [1,\infty]$ satisfies conditions
\eqref{boundsnn}, \eqref{llc} and \eqref{decaycondition2}.

\subsection{Riesz potential operator and Riesz fractional derivative}
Recall that the \textsl{Riesz potential operator}, also known as \textsl{fractional integral operator}, is
given by
\begin{equation}\label{rieszoperator}
I^\alpha g(x):=\frac{1}{\gamma_{n}(\al)}\int_{\mathbb R^n}
\frac{g(y)}{|x-y|^{n-\al}}\;d y,  \quad 0<\alpha<n,
\end{equation}
with the normalizing constant  $\gamma_{n}(\al)=2^\alpha \pi^\frac{n}{2}
\frac{\Gamma\left(\frac{\al}{2}\right)}{\Gamma\left(\frac{n-\al}{2}\right)}$.
The hypersingular  integral
\begin{equation}\label{riesz_derivative}
\mathbb{D}^\alpha f(x):= \frac{1}{d_{n,\ell }(\alpha)}  \int_{\mathbb R^n}
\frac{\Delta_y^\ell  f (x)}{|y|^{n+\alpha}}\;d y ,
\end{equation}
where $\Delta_y^\ell  f(x)$ is a finite difference of order $\ell>
2\left[\frac{\al}{2}\right]$, is known as the \textsl{Riesz fractional
derivative}, see \cite[Chapter 3]{hypsambook} or \cite[Sections
26]{samkilbook} for the value of the normalizing constant  $d_{n,\ell
}(\alpha)$. It is known \cite{hypsambook, samkilbook} that operator
\eqref{riesz_derivative} is left inverse to the operator $I^\al$ within the
frameworks of $L^{p}$-spaces, which was extended to variable exponent spaces
$L^{p(\cdot)}(\mathbb R^n)$ in \cite{18c}.

Everywhere in the sequel, $\ell >\al$ and is even.

 When considered on functions in the range $I^\alpha (X)$ of the operator $I^\al$  over this or
that space $X$, the integral in (\ref{riesz_derivative})  is always
interpreted as the limit $\mathbb{D}^\alpha f:=\lim\limits_{\varepsilon \to
0} \mathbb{D}_\varepsilon^\alpha f$
 in the norm
of the space $X$,  of the truncated operators
\begin{equation}\label{truncatedhypersingular}
\mathbb{D}_\varepsilon^\alpha f(x) =\frac{1}{d_{n,\ell }(\alpha)} \int_{|y|>\varepsilon}
   \frac{\Delta_y^\ell  f  (x)}{|y|^{n+\alpha}} \;d y .
\end{equation}

\vs{1mm} The following proposition was proved in \cite[Theorem 5.5]{18c}
\begin{proposition}\label{alm}
Let $p\in \mathfrak M(\mathbb{R}^n)$ and $1<p_-(\mathbb R^n)\leqq p_+(\mathbb
R^n)<\frac{n}{\alpha}$. Then
$$\mathbb D^\alpha I^\alpha \varphi=\varphi,\qquad \varphi \in
L^{p(\cdot)}(\mathbb R^n),$$ where the hypersingular operator
 $\mathbb D^\alpha$ is understood as
convergent in $L^{p(\cdot)}$-norm.
\end{proposition}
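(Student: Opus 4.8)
The plan is to reduce the inversion identity to the $L^{\pc}(\rn)$--convergence of a potential-type approximate identity, with the whole variable-exponent input concentrated in the hypotheses $p\in\mathfrak M(\rn)$ and $p_+<\frac n\al$.

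First I would derive a convolution representation of the truncated composition $\mathbb D_\ve^\al I^\al$. Writing $I^\al$ as convolution with the Riesz kernel $k_\al(x)=|x|^{\al-n}/\gm_n(\al)$ and using that $\Delta_y^\ell$ is a finite combination of translations, hence may be transferred onto $k_\al$ under the convolution sign, one obtains, for $\vi$ in a convenient dense subset of $L^{\pc}(\rn)$ (e.g. $\vi\in C_0^\infty(\rn)$),
\[
\mathbb D_\ve^\al I^\al\vi=\cK_\ve\ast\vi,\qquad
\cK_\ve(x)=\frac{1}{d_{n,\ell}(\al)}\intl_{|y|>\ve}\frac{\Delta_y^\ell k_\al(x)}{|y|^{n+\al}}\,dy=\mathbb D_\ve^\al k_\al(x).
\]
Since $k_\al$ is homogeneous of degree $\al-n$ and $\mathbb D_\ve^\al$ obeys the scaling $\mathbb D_\ve^\al[g(\lb\,\cdot)](x)=\lb^\al(\mathbb D_{\lb\ve}^\al g)(\lb x)$, the kernel $\cK_\ve$ is a genuine dilation, $\cK_\ve(x)=\ve^{-n}\cK(x/\ve)$ with $\cK:=\cK_1$. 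I would then invoke the classical facts (\cite[Chapter 3]{hypsambook}, \cite[Section 26]{samkilbook}) that $\cK\in L^1(\rn)$, that $\intl_{\rn}\cK(x)\,dx=1$ --- this being exactly the purpose of the normalizing constant $d_{n,\ell}(\al)$, for which the Fourier symbol of $\mathbb D_\ve^\al$ tends to $|\xi|^\al$ --- and that $\cK$ admits an integrable radial decreasing majorant, so that $\{\cK_\ve\}$ is a potential-type approximate identity in the sense of Subsection~\ref{identityappr}.

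Next I would settle well-definedness and the limit transition. Because $p\in\mathfrak M(\rn)$ and $p_+<\frac n\al$, the Sobolev-type mapping property $I^\al\colon L^{\pc}(\rn)\to L^{q(\cdot)}(\rn)$ with $\frac1{q(x)}=\frac1{p(x)}-\frac\al n$ holds (cf. \cite{18c}), so $I^\al\vi$ is a genuine function and $\mathbb D_\ve^\al I^\al\vi$ has pointwise meaning for every $\vi\in L^{\pc}(\rn)$. As $\cK_\ve\ast(\cdot)$ is bounded on $L^{\pc}(\rn)$ uniformly in $\ve$ --- this is the uniform boundedness of potential-type approximate identities, available because the maximal operator is bounded on $L^{\pc}(\rn)$, i.e. $p\in\mathfrak M(\rn)$, see \cite{106} --- the identity $\mathbb D_\ve^\al I^\al\vi=\cK_\ve\ast\vi$ extends, by a limiting argument resting on this uniform boundedness, from $C_0^\infty(\rn)$ to all $\vi\in L^{\pc}(\rn)$, and $\|\mathbb D_\ve^\al I^\al\vi\|_{\pc}\le C\|\vi\|_{\pc}$ with $C$ independent of $\ve$. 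Finally, the $L^{\pc}(\rn)$-convergence of potential-type identity approximations under $p\in\mathfrak M(\rn)$ \cite{106} gives $\cK_\ve\ast\vi\to\vi$ in $L^{\pc}(\rn)$ as $\ve\to0$, which is precisely the assertion $\mathbb D^\al I^\al\vi=\vi$, with $\mathbb D^\al$ understood as the $L^{\pc}$-limit of the truncations \eqref{truncatedhypersingular}.

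I expect the main obstacle to lie in the first step: rigorously justifying the interchange of the $y$-integration with the convolution (the difference $\Delta_y^\ell(I^\al\vi)(x)$ being only dominated by a sum of translates of $I^\al\vi$, absolute convergence has to be obtained from the decay $|y|^{-n-\al}$ at infinity together with $\ell>\al$, and from the boundedness of the integrand near $|y|=\ve$), and, above all, identifying $\cK$ as a bona fide approximate identity --- checking both $\intl_{\rn}\cK=1$ and the integrable radial majorant. These, however, are classical constant-exponent computations that can be quoted from \cite{hypsambook, samkilbook}; the variable exponent enters only through the $L^{\pc}$-Sobolev theorem and the $L^{\pc}$-convergence of identity approximations.
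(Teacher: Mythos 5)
The paper does not prove this proposition itself; it cites it from Almeida, \cite[Theorem~5.5]{18c}. Your argument is the standard route used there (and in \cite[Ch.~3]{hypsambook} for constant $p$): write $\mathbb D_\ve^\al I^\al\vi=(K_{\ell,\al})_\ve\ast\vi$ with the dilation $(K_{\ell,\al})_\ve(x)=\ve^{-n}K_{\ell,\al}(x/\ve)$ (this is exactly what the paper records on the Fourier side in \eqref{1.10}--\eqref{1.12}), invoke the classical facts that $K_{\ell,\al}\in L^1$, $\int K_{\ell,\al}=1$, with an integrable radial decreasing majorant, and then appeal to the $L^{p(\cdot)}$ theory of potential-type approximate identities under $p\in\mathfrak M(\rn)$, the condition $p_+<n/\al$ serving only to make $I^\al\vi$ a genuine function so that $\mathbb D_\ve^\al I^\al\vi$ has pointwise meaning. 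This is correct and essentially the same approach as the paper's source.
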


We will also use the following result for variable exponent spaces, proved in
\cite[Theorems 3.2-3.3]{alsamk}. (Note that in \cite{alsamk} this statement
was formulated for $p$ satisfying  the log- and decay condition, but the
analysis of the proof shows that it uses only the fact that the maximal
operator is bounded).

\begin{proposition} \label{rieszrange1}
Let $p\in \mathfrak M(\mathbb{R}^n)$,  $1<p_-(\mathbb R^n)\leqq p_+(\mathbb
R^n)<\frac{n}{\alpha}$,  and let $f$ be a locally integrable function. Then
$f\in I^\alpha [ L^{p(\cdot)} ]$, if and only if $f\in L^{q(\cdot)}$, with
$\frac{1}{q(\cdot)} = \frac{1}{p(\cdot)} - \frac{\alpha}{n}$, and
\begin{equation} \label{uniformbound}
\|\mathbb{D}^\alpha_\varepsilon f \|_{p(\cdot)} \leqq C
\end{equation}
where $C$ does not depend on  $\varepsilon >0$.
\end{proposition}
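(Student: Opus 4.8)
The plan is to reduce both implications to one structural fact. Since $\widehat{\Delta_y^\ell f}(\xi)=(1-\me^{-iy\cdot\xi})^\ell\widehat f(\xi)$, a scaling $y\mapsto y/|\xi|$ in \eqref{truncatedhypersingular} gives
\[
  \widehat{\mathbb D^\al_\ve f}(\xi)=|\xi|^\al\,\mathcal B(\ve|\xi|)\,\widehat f(\xi),\qquad
  \mathcal B(r):=\frac{1}{d_{n,\ell}(\al)}\int_{|u|>r}\frac{(1-\me^{-iu_1})^\ell}{|u|^{n+\al}}\,du ,
\]
hence, cancelling the factor $|\xi|^{\pm\al}$ of $I^\al$, both composites $\mathbb D^\al_\ve I^\al$ and $I^\al\mathbb D^\al_\ve$ equal the Fourier multiplier operator $T_\ve$ with radial symbol $\xi\mapsto\mathcal B(\ve|\xi|)$ (equivalently, convolution with the dilated kernel $\ve^{-n}\Phi(\cdot/\ve)$, $\Phi=\mathbb D^\al_1 I^\al\delta$). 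Here $\mathcal B(0)=1$ by the normalization of $d_{n,\ell}(\al)$, while $\mathcal B(r)=1+O(r^{\ell-\al})$ as $r\to0$ and $\mathcal B(r)=O(r^{-\al})$ as $r\to\infty$, together with the analogous bounds on $(r\tfrac{d}{dr})^k\mathcal B$ for $k\leqq n$; thus $\mathcal B(|\cdot|)$ satisfies Mikhlin's condition in the form \eqref{reducedequiv}. Finally, since $\frac{1}{q(\cdot)}=\frac{1}{p(\cdot)}-\frac{\al}{n}$ inherits the log- and decay conditions from $p(\cdot)$ and $1<p_-\leqq q_-\leqq q_+<\infty$ thanks to $p_+<n/\al$, we have $q(\cdot)\in\cP(\rn)\sbs\mathfrak M(\rn)$, so Theorem \ref{mihhor1}, Lemmas \ref{dilation}--\ref{12} and Proposition \ref{urifio} all apply in $L^{q(\cdot)}$ as well as in $L^{\pc}$.

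\emph{Necessity.} Let $f=I^\al\vi$, $\vi\in L^{\pc}$. The variable exponent Sobolev theorem for $I^\al$ (valid since $p\in\mathfrak M$, $p_+<n/\al$) yields $f\in L^{q(\cdot)}$. By Lemma \ref{dilation} and Theorem \ref{mihhor1} the family $\{T_\ve\}$ is uniformly bounded in $L^{\pc}$, so $\|\mathbb D^\al_\ve f\|_{\pc}=\|T_\ve\vi\|_{\pc}\leqq C\|\vi\|_{\pc}$ with $C$ independent of $\ve>0$, which is \eqref{uniformbound}. (The convergence $\mathbb D^\al_\ve f\to\vi$ of Proposition \ref{alm} is recovered by Lemma \ref{12}, since $\mathcal B(\ve|\xi|)\to1$.)

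\emph{Sufficiency.} Assume $f\in L^{q(\cdot)}$ and $\|\mathbb D^\al_\ve f\|_{\pc}\leqq C$ for all $\ve>0$. As $1<p_-\leqq p_+<\infty$, the space $L^{\pc}$ is reflexive, so there are $\ve_k\to0$ and $g\in L^{\pc}$ with $\mathbb D^\al_{\ve_k}f\rightharpoonup g$ weakly in $L^{\pc}$. Apply $I^\al$: since $I^\al\colon L^{\pc}\to L^{q(\cdot)}$ is bounded, hence weak-to-weak continuous, $I^\al\mathbb D^\al_{\ve_k}f\rightharpoonup I^\al g$ in $L^{q(\cdot)}$; on the other hand, by the Fourier-side identification above (legitimate in $\mathcal S'$, cf.\ \cite{samkilbook}, since $\mathbb D^\al_{\ve_k}f\in L^{\pc}$) one has $I^\al\mathbb D^\al_{\ve_k}f=T_{\ve_k}f$, and $T_{\ve_k}f\to f$ strongly in $L^{q(\cdot)}$ by Lemma \ref{12}. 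Comparing the two limits, $f=I^\al g$ with $g\in L^{\pc}$, i.e.\ $f\in I^\al[L^{\pc}]$.

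\emph{Main obstacle.} The real content is the structural fact of the first paragraph: after removing the homogeneous factor $|\xi|^{\pm\al}$, the truncated-hypersingular/Riesz-potential composites are controlled by the single radial symbol $\mathcal B(\ve|\xi|)$ satisfying Mikhlin's condition --- it is here that the variable exponent tools (Theorem \ref{mihhor1}, Proposition \ref{urifio}, Lemma \ref{12}) do the work that the Wiener algebra and Young's inequality do for constant $p$, as stressed in the Introduction. The only genuinely technical step is checking the derivative bounds on $\mathcal B$ for large $r$, which rests on the asymptotics of the spherical means $\int_{S^{n-1}}(1-\me^{-ir\om_1})^\ell\,d\om$, i.e.\ of Bessel functions; for this particular symbol it is a finite, classical estimate (cf.\ \cite{samkilbook,hypsambook}). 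Everything else --- the $\mathcal S'$-calculus for the rough data $f\in L^{q(\cdot)}$, $\mathbb D^\al_\ve f\in L^{\pc}$, and the extraction of the weak limit --- is routine once the variable exponent Mikhlin theorem is available.
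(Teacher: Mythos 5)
First, a bibliographic remark: the paper does not actually prove Proposition~\ref{rieszrange1}; it imports it verbatim from \cite[Theorems 3.2--3.3]{alsamk}, only noting that the cited proof needs merely $p\in\mathfrak M(\Rn)$. So there is no in-paper argument to compare against; I assess your attempt on its own.

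The architecture is sensible: reduce both implications to the uniform boundedness and strong convergence of $T_\ve=\mathbb D^\al_\ve I^\al$ with radial symbol $w(\ve|\xi|)$ (your ``$\mathcal B$'' is the paper's $w$, not the paper's $\mathcal B$), use the variable-exponent Sobolev theorem for necessity and reflexivity plus weak compactness for sufficiency. Those reductions are correct once the multiplier fact is available.

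The genuine gap is the sentence asserting that $w(|\cdot|)$ satisfies Mikhlin's condition \eqref{reducedequiv} for $k\leqq n$, dismissed as ``a finite, classical estimate.'' This is exactly the place where the present paper, working with the closely related function $\mathcal B(r)=r^\al w(r)/(1-\me^{-r})^\al$, meets its main obstacle. Lemma~\ref{lem} shows that $\mathcal B^{(k)}(r)$ carries, for large $r$, an oscillating term $\sim r^{-\nu}\sum_i c_i J_{\nu-2}(\ell_i r)$ with $\nu=n/2$, whose $r^{-1/2}$-amplitude does not improve under differentiation. The same phenomenon afflicts $w$: from $w(r)=\mathcal B(r)/r^\al+O(\me^{-r})$ one gets $w^{(k)}(r)=O(r^{-\al-k})+O(r^{-\al-\nu-1/2})$, hence $r^kw^{(k)}(r)=O(r^{-\al})+O\bigl(r^{\,k-\al-\nu-1/2}\bigr)$. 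For $k=n$ this is $O(r^{(n-1)/2-\al})$, unbounded unless $\al\geqq(n-1)/2$ --- a restriction Proposition~\ref{rieszrange1} does not impose. In other words, the Mikhlin route you invoke breaks down for small $\al$ and $n\geqq2$, which is precisely why the paper's own Theorem~\ref{amultiplier} does not verify \eqref{reduced} for the large-$r$ piece $\mathcal B_3$ but instead proves (Lemma~\ref{mainlemmaas}) that the associated kernel has an integrable radial nonincreasing majorant, so that Proposition~\ref{urifio} applies. Your parenthetical aside about ``convolution with the dilated kernel $\ve^{-n}\Phi(\cdot/\ve)$, $\Phi=\mathbb D^\al_1 I^\al\delta$'' points at the repair: show that $\Phi=K_{\ell,\al}$ admits an integrable radial majorant and then use Proposition~\ref{urifio} for both the uniform bound and the strong convergence $T_\ve f\to f$. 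But that majorant estimate is the nontrivial Bessel-asymptotics step, not a routine one, and it needs to be supplied.
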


It is known (\cite[p.70]{hypsambook}) that
\begin{equation}\label{1.10}  F (\mathbb D^\alpha_\varepsilon f)(x)=
\widehat{K_{\ell,\alpha}}(\varepsilon x) |x|^\alpha \widehat{f}(x), \ \quad f
\in C^\infty_0(\mathbb R^n),
\end{equation}
where $\widehat{K_{\ell,\alpha}}(x)$ is the Fourier transform of the function
$K_{\ell,\alpha}(x)$ with the property
\begin{equation}\label{1.12}
  K_{\ell,\alpha}(x)\in L^1(\mathbb R^n),\qquad \int_{\mathbb R^n}
K_{\ell,\alpha}(x)\;d x=1.
\end{equation}
The function $\widehat{K_{\ell,\alpha}}(x)$ is given explicitly by
\begin{equation}\label{1.13}
  \widehat{K_{\ell,\alpha}}(x)=\frac{(2\mathrm{i})^\ell}{d_{n,\ell}(\alpha)}
\int_{|y|>|x|} \frac{\sin^\ell (y_1)}{|y|^{n+\alpha}}\;d y.
\end{equation}

For brevity of notation, we will denote $\widehat{K_{\ell,\alpha}}(x)$ simply
as $w(|x|)$, therefore

\begin{equation}\label{ww}
w(|x|)=c\int_{|y|>|x|} \frac{\sin^\ell(y_1)}{|y|^{n+\alpha}}\;d y
=c\int_{|x|}^\infty \frac{V(\rho)}{\rho^{1+\alpha}}\, d \rho
\end{equation}
where
\begin{equation}\label{Vr}
V(\rho)=\int_{S^{n-1}} \sin^\ell (\rho \sigma_1)\;d \sigma \ \quad
\textrm{and} \ \quad c=\frac{(2\mathrm{i})^\ell}{d_{n,\ell}(\al)}.
\end{equation}

\begin{lemma} \label{lemma}
The following formula is valid
\begin{equation}\label{sphericalintegral}
V(\rho) = \lb + \sum_{i=0}^{\frac{\ell}{2}-1} C_i
\frac{J_{\nu-1}(\ell_i\rho)}{(\ell_i \rho)^{\nu-1}},
\end{equation}
where $\ell = 2,4,6,\ldots$, $J_{\nu-1}(r)$ is the Bessel function of the
first kind,  $\nu=\frac{n}{2}$, $\ell_i=\ell-2i$ and $\lb$ and $C_i $ are
constants:
\begin{equation}\label{values}
\lb = \frac{4\pi^\frac{n}{2}\Gm\left(\frac{\ell+1}{2}\right)
}{\ell\Gm\left(\frac{\ell}{2}\right)\Gm\left(\frac{n}{2}\right)}, \qquad C_i
=(-1)^{\frac{\ell}{2}-i}(2\pi)^\frac{n}{2}2^{1-\ell} \binom{\ell}{i}.
\end{equation}
\end{lemma}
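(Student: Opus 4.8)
The goal is to evaluate the spherical integral $V(\rho)=\int_{S^{n-1}}\sin^\ell(\rho\sigma_1)\,d\sigma$ in closed form. The natural first step is to linearize the power $\sin^\ell$. Since $\ell=2m$ is even, the standard identity
\[
\sin^\ell\theta=\frac{1}{2^\ell}\binom{\ell}{\ell/2}+\frac{(-1)^{\ell/2}}{2^{\ell-1}}\sum_{i=0}^{\ell/2-1}(-1)^{i}\binom{\ell}{i}\cos\bigl((\ell-2i)\theta\bigr)
\]
converts the problem into computing $\int_{S^{n-1}}\cos(r\sigma_1)\,d\sigma$ for $r=\ell_i\rho=(\ell-2i)\rho$, together with the constant term $2^{-\ell}\binom{\ell}{\ell/2}\,|S^{n-1}|$. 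The constant $\lambda$ in \eqref{values} should then come out as $2^{-\ell}\binom{\ell}{\ell/2}\cdot\frac{2\pi^{n/2}}{\Gamma(n/2)}$; one would check this equals $\frac{4\pi^{n/2}\Gamma((\ell+1)/2)}{\ell\,\Gamma(\ell/2)\,\Gamma(n/2)}$ using the duplication formula $\Gamma(\ell)=\frac{2^{\ell-1}}{\sqrt\pi}\Gamma(\ell/2)\Gamma((\ell+1)/2)$ and $\binom{\ell}{\ell/2}=\frac{\Gamma(\ell+1)}{\Gamma(\ell/2+1)^2}$.

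Next I would invoke the classical Bochner-type formula for the Fourier transform of the surface measure on $S^{n-1}$, namely
\[
\int_{S^{n-1}}e^{\mathrm{i}r\sigma_1}\,d\sigma=(2\pi)^{n/2}\frac{J_{n/2-1}(r)}{r^{n/2-1}},
\]
valid for all $r>0$ (and extending continuously to $r=0$); taking real parts kills nothing since the left side is already real by symmetry $\sigma_1\mapsto-\sigma_1$, so $\int_{S^{n-1}}\cos(r\sigma_1)\,d\sigma=(2\pi)^{n/2}J_{\nu-1}(r)/r^{\nu-1}$ with $\nu=n/2$. Substituting $r=\ell_i\rho$ and collecting constants, the coefficient of $J_{\nu-1}(\ell_i\rho)/(\ell_i\rho)^{\nu-1}$ becomes $\frac{(-1)^{\ell/2}}{2^{\ell-1}}(-1)^i\binom{\ell}{i}(2\pi)^{n/2}=(-1)^{\ell/2-i}(2\pi)^{n/2}2^{1-\ell}\binom{\ell}{i}$, which is exactly $C_i$ in \eqref{values}. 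This matches the claimed formula \eqref{sphericalintegral} term by term.

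The only genuinely non-routine ingredient is the surface-measure Fourier transform formula; it is entirely standard (it follows from writing $d\sigma$ in terms of the slice $\sigma_1=s$, integrating $e^{\mathrm{i}rs}$ against $(1-s^2)^{(n-3)/2}$ over $[-1,1]$, and recognizing the Poisson integral representation of the Bessel function), so I would either cite it or include the one-line derivation. The main thing to be careful about is bookkeeping of the normalizing constants — tracking the factors $2^{-\ell}$, $(-1)^{\ell/2}$, $(2\pi)^{n/2}$ versus $\pi^{n/2}$, and the passage between $|S^{n-1}|=2\pi^{n/2}/\Gamma(n/2)$ and the Bessel value at $0$, which is $J_{\nu-1}(r)/r^{\nu-1}\to 1/(2^{\nu-1}\Gamma(\nu))$ and must be consistent with the $i$-sum. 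Everything else is elementary trigonometric linearization, so I expect no real obstacle beyond this constant-chasing.
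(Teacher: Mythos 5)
Your proof is correct and is essentially the paper's argument: the paper applies the Catalan slicing formula first and then the $\sin^\ell$ linearization together with Poisson's integral representation of $J_\nu$, while you linearize first and then invoke the Bochner formula $\int_{S^{n-1}}e^{\mathrm{i}r\sigma_1}\,d\sigma=(2\pi)^{n/2}J_{\nu-1}(r)/r^{\nu-1}$, which is precisely the Catalan-plus-Poisson step packaged into one identity; the content is identical. One small caveat worth flagging: the constant check you deferred does not actually close. From your (correct) expression $\lambda=2^{-\ell}\binom{\ell}{\ell/2}\,|S^{n-1}|=2^{-\ell}\binom{\ell}{\ell/2}\cdot\frac{2\pi^{n/2}}{\Gamma(n/2)}$, the duplication formula gives $\lambda=\frac{4\pi^{(n-1)/2}\Gamma\left(\frac{\ell+1}{2}\right)}{\ell\,\Gamma\left(\frac{\ell}{2}\right)\Gamma\left(\frac{n}{2}\right)}$, which differs from the value displayed in the lemma by a factor $\sqrt{\pi}$ (test $\ell=n=2$: the integral is $\pi-\pi J_0(2\rho)$, so $\lambda=\pi$, whereas the lemma's formula yields $\pi^{3/2}$). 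So the displayed $\lambda$ appears to carry a typo; your derived value, not the one quoted, is the correct one, and you should state it rather than assert agreement.
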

\begin{proof} Formula \eqref{sphericalintegral} is a consequence of the
 Catalan formula
\begin{equation}\label{catfor}
  \int_{S^{n-1}} \sin^\ell
  (\rho \sigma_1)\;d \sigma = |S^{n-2}|
  \int_{-1}^1 \sin^\ell (\rho t) (1-t^2)^\frac{n-3}{2}\;d t,
\end{equation}
(see, for instance, \cite[p.13]{hypsambook}), the Fourier expansion
\begin{equation}\label{fourier}
  \sin^{\ell} (t) =\frac{1}{2^{\ell-1}} \sum_{i=0}^{\frac{\ell}{2}-1}
   (-1)^{\frac{\ell}{2}-i} \binom{\ell}{i} \cos \big((\ell-2i)t\big)+
   \frac{1}{2^{\ell}} \binom{\ell}{\ell/2}
\end{equation}
of the function $\sin^\ell (t)$ with even $\ell$ (see, e.g., \cite[Appendix
I.1.9]{prubrymar}), and the Poisson formula
\begin{equation}\label{intbes}
J_\nu(\rho)=\frac{(\rho/2)^\nu}{\Gamma\left( \frac{1}{2}\right)\Gamma\left(
\nu+\frac{1}{2}\right)} \int_{-1}^1 \cos (\rho t) \left(1-t^2 \right)^{\nu
-\frac{1}{2}}\;d t
\end{equation}
with $\Re \nu >-\frac{1}{2}$  for the Bessel function (see, e.g.,
\cite{lebedev}). The values in \eqref{values} are obtained by direct
calculations via properties of Gamma-function.
\end{proof}

\vs{3mm}Following  \cite{hypsam} (see also \cite[p. 214]{hypsambook}), we
 make use of  the
 functions
\begin{equation}\label{3.10}
  A(x)=\frac{(1-\me^{-|x|})^\alpha}{|x|^\alpha
w(|x|) } \qquad \mathrm{and}  \qquad B(x) = \frac{1}{A(x)},
\quad x\in \mathbb R^n,
\end{equation}
which will play a central role in this paper.

Since the functions  $A(x)$ and $B(x)$ are radial, we find it
convenient to also use the notation
\begin{equation}\label{conva}
\mathcal{A}(r)=\frac{(1-\me^{-r})^\alpha}{r^\alpha w(r)}  \qquad
\mathrm{and}  \qquad \mathcal{B}(r)=\frac{r^\alpha
w(r)}{(1-\me^{-r})^\alpha}.
\end{equation}

\subsection{Bessel potential operator}

The \textsl{Bessel potential} of order $\alpha >0$ of the density $\varphi$ is defined by
\begin{equation}
\mathfrak B^\alpha \varphi(x)=\int_{\mathbb R^n} G_\alpha(x-y) \varphi(y) \;d
y
\end{equation}
where the Fourier transform of the \textsl{Bessel kernel} $G_\alpha$ is given by
\[
\widehat{G_\alpha}(x)=(1+|x|^2)^{-\alpha/2},\quad x\in \mathbb R^n,\quad \alpha >0.
\]

\begin{definition}
We define the \textsl{variable exponent Bessel potential space}, sometimes
 also called \textsl{Liouville space of fractional smoothness},
 as the range of the Bessel potential operator
\[
\mathfrak B^\alpha\left[L^{p(\cdot)}(\mathbb R^n)\right]=\left\{f: \;
f=\mathfrak B^\alpha\varphi, \;\varphi\in L^{p(\cdot)}(\mathbb R^n)
\right\},\quad \alpha> 0.
\]

\end{definition}
The following characterization of the variable exponent Bessel potential
space
  via hypersingular integrals was given in \cite{alsamk}.

\begin{proposition}\label{besselcharacterization}
Let $0<\alpha<n$. If $1<p_-\leqq p_+<n/\alpha $ and $p(\cdot)$
satisfies conditions \eqref{llc} and \eqref{decaycondition2}, then
$\mathfrak B^\alpha\left(L^{p(\cdot)}\right)=L^{p(\cdot)}\cap
I^\alpha\left(L^{p(\cdot)}\right) $ with equivalent norms.
\end{proposition}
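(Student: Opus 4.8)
The plan is to establish both inclusions in $\mathfrak B^\alpha(L^{p(\cdot)})=L^{p(\cdot)}\cap I^\alpha(L^{p(\cdot)})$ by exhibiting the operators involved as Fourier $p(\cdot)$-multipliers and applying Theorem \ref{mihhor1}. Throughout, $L^{p(\cdot)}=L^{p(\cdot)}(\mathbb R^n)$, $\frac{1}{q(\cdot)}=\frac{1}{p(\cdot)}-\frac{\alpha}{n}$, and $T_m$ denotes the operator \eqref{multiplier} with symbol $m$; the hypotheses say precisely that $p\in\mathcal P(\mathbb R^n)$ with $p_+<n/\alpha$, so $p\in\mathfrak M(\mathbb R^n)$ and Propositions \ref{alm}, \ref{rieszrange1} apply. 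The inclusion $\mathfrak B^\alpha(L^{p(\cdot)})\subseteq L^{p(\cdot)}$ is immediate: the radial symbol $\widehat{G_\alpha}(\xi)=(1+|\xi|^2)^{-\alpha/2}$ satisfies Mikhlin's condition \eqref{mih1} (checked via its reduced form \eqref{reduced}), so $\mathfrak B^\alpha=T_{\widehat{G_\alpha}}$ is bounded on $L^{p(\cdot)}$ by Theorem \ref{mihhor1}; alternatively $G_\alpha$ is a potential-type approximate identity, so Proposition \ref{urifio}(i) with $t=1$ applies.

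For $\mathfrak B^\alpha(L^{p(\cdot)})\subseteq I^\alpha(L^{p(\cdot)})$, I would work with the symbol of $\mathbb D^\alpha\mathfrak B^\alpha$,
\[
 b(\xi):=\frac{|\xi|^\alpha}{(1+|\xi|^2)^{\alpha/2}}=\bigl(1+|\xi|^{-2}\bigr)^{-\alpha/2}.
\]
Verifying \eqref{mih1} for $b$, via the reduced form with $\mathcal B_0(r)=(1+r^{-2})^{-\alpha/2}$, is elementary; the only point needing care is $r\to0$, where $\mathcal B_0(r)\asymp r^\alpha$, so that $r^k\mathcal B_0^{(k)}(r)=O(r^\alpha)\to0$ because $\alpha>0$. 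Theorem \ref{mihhor1} then makes $T_b$ bounded on $L^{p(\cdot)}$. A Fourier transform computation gives $I^\alpha T_b=\mathfrak B^\alpha$ on $C_0^\infty$ (both sides having symbol $(1+|\xi|^2)^{-\alpha/2}$), and this extends to all of $L^{p(\cdot)}$ by density of $C_0^\infty$, both sides being continuous from $L^{p(\cdot)}$ into $\mathcal S'(\mathbb R^n)$ (using the Sobolev boundedness $I^\alpha\colon L^{p(\cdot)}\to L^{q(\cdot)}$). Hence for $f=\mathfrak B^\alpha\varphi$ one gets $f=I^\alpha(T_b\varphi)$ with $T_b\varphi\in L^{p(\cdot)}$, i.e.\ $f\in I^\alpha(L^{p(\cdot)})$ and $\|T_b\varphi\|_{p(\cdot)}\le C\|\varphi\|_{p(\cdot)}$. (Alternatively one verifies the hypotheses of Proposition \ref{rieszrange1}: by \eqref{1.10} one has $\mathbb D^\alpha_\varepsilon f=T_{w(\varepsilon|\cdot|)\,b}\,\varphi$, and $w(\varepsilon|\cdot|)\,b$ satisfies \eqref{mih1} uniformly in $\varepsilon$ by Lemma \ref{dilation}.)

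The reverse inclusion $L^{p(\cdot)}\cap I^\alpha(L^{p(\cdot)})\subseteq\mathfrak B^\alpha(L^{p(\cdot)})$ is the crux and, I expect, the main obstacle: the would-be density $\varphi$ with $\widehat\varphi=(1+|\xi|^2)^{\alpha/2}\widehat f$ has unbounded symbol $(1+|\xi|^2)^{\alpha/2}$, while $(1+|\xi|^2)^{\alpha/2}|\xi|^{-\alpha}$ is moreover singular at the origin, so one must combine the two data $f\in L^{p(\cdot)}$ and $\psi:=\mathbb D^\alpha f\in L^{p(\cdot)}$ through a low/high-frequency splitting. Let $f\in L^{p(\cdot)}$ with $f=I^\alpha\psi$, $\psi\in L^{p(\cdot)}$; then $\widehat\psi(\xi)=|\xi|^\alpha\widehat f(\xi)$, and $\psi=\mathbb D^\alpha f$ is uniquely determined by Proposition \ref{alm}. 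Choosing $\chi\in C_0^\infty(\mathbb R^n)$ with $\chi\equiv1$ near the origin, write
\[
 (1+|\xi|^2)^{\alpha/2}=h_0(\xi)+m_1(\xi)\,|\xi|^\alpha,\qquad h_0(\xi):=\chi(\xi)(1+|\xi|^2)^{\alpha/2},\qquad m_1(\xi):=\bigl(1-\chi(\xi)\bigr)\bigl(1+|\xi|^{-2}\bigr)^{\alpha/2},
\]
which is legitimate since $m_1(\xi)\,|\xi|^\alpha=(1-\chi(\xi))(1+|\xi|^2)^{\alpha/2}$. Here $h_0\in C_0^\infty(\mathbb R^n)$, while $m_1$ vanishes near the origin, is bounded, tends to $1$ at infinity with decaying derivatives, and satisfies \eqref{mih1} (again via \eqref{reduced}); so $T_{h_0}$ and $T_{m_1}$ are bounded on $L^{p(\cdot)}$ by Theorem \ref{mihhor1}. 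Put $\varphi:=T_{h_0}f+T_{m_1}\psi\in L^{p(\cdot)}$. Then $\widehat\varphi=h_0(\xi)\widehat f+m_1(\xi)\widehat\psi=h_0(\xi)\widehat f+m_1(\xi)|\xi|^\alpha\widehat f=(1+|\xi|^2)^{\alpha/2}\widehat f$, hence $\widehat{\mathfrak B^\alpha\varphi}=(1+|\xi|^2)^{-\alpha/2}\widehat\varphi=\widehat f$, i.e.\ $\mathfrak B^\alpha\varphi=f$, so $f\in\mathfrak B^\alpha(L^{p(\cdot)})$ with $\|\varphi\|_{p(\cdot)}\le C\bigl(\|f\|_{p(\cdot)}+\|\psi\|_{p(\cdot)}\bigr)$.

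Equivalence of norms then follows by combining the estimates of the two inclusions, using that $\varphi$ and $\psi$ are uniquely determined by $f$ (injectivity of $\mathfrak B^\alpha$ and of $I^\alpha$ on $L^{p(\cdot)}$, the latter being $\mathbb D^\alpha I^\alpha=\mathrm{id}$ from Proposition \ref{alm}). The steps still needing careful justification are the distributional identities $\widehat{I^\alpha g}=|\xi|^{-\alpha}\widehat g$ and $\widehat{\mathbb D^\alpha f}=|\xi|^\alpha\widehat f$ for the relevant $L^{p(\cdot)}$-classes (standard, through $L^{p(\cdot)}\hookrightarrow L^{p_-}+L^{p_+}$ and the constant-exponent theory, cf.\ \cite{18c}) and the Mikhlin verifications for $b$ and $m_1$; the only genuinely non-formal ingredient is the frequency decomposition in the last inclusion, where it is essential that $f$ itself — not merely $\mathbb D^\alpha f$ — lies in $L^{p(\cdot)}$.
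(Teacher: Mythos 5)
The paper does not prove Proposition \ref{besselcharacterization}: it is quoted as a known result from \cite{alsamk}, and no proof appears in this manuscript. So there is no ``paper's own proof'' to compare against; what I can do is assess your argument on its merits.

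Your proof strategy is sound and, to my knowledge, essentially the standard one for this kind of statement: everything is reduced to Fourier $p(\cdot)$-multipliers checked via the radial Mikhlin condition \eqref{reduced} and Theorem \ref{mihhor1}. The symbols $\widehat{G_\alpha}$, $b(\xi)=|\xi|^\alpha(1+|\xi|^2)^{-\alpha/2}$, $h_0$, and $m_1$ are all elementary to verify, since none of them involves the oscillatory Bessel tails that make the multiplier $\mathcal{B}$ of the present paper hard; and the decomposition $(1+|\xi|^2)^{\alpha/2}=h_0(\xi)+m_1(\xi)|\xi|^\alpha$ is exactly the right way to combine the two hypotheses $f\in L^{p(\cdot)}$ and $\mathbb D^\alpha f\in L^{p(\cdot)}$. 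You correctly identify that both pieces of data are needed and correctly flag the distributional identities $\widehat{I^\alpha g}=|\xi|^{-\alpha}\widehat g$, $\widehat\psi=|\xi|^\alpha\widehat f$ as the routine-but-nontrivial book-keeping.

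Two cautions. First, your parenthetical alternative for the inclusion $\mathfrak B^\alpha(L^{p(\cdot)})\subseteq I^\alpha(L^{p(\cdot)})$ — invoking Proposition \ref{rieszrange1} via the symbol $w(\varepsilon|\cdot|)\,b$ — silently assumes that $w(|x|)=\widehat{K_{\ell,\alpha}}(x)$ satisfies Mikhlin's condition; that is not obvious, because at infinity $w(r)$ carries the same oscillatory Bessel terms that force the entire Section \ref{crucial} machinery (Lemmas \ref{lem0}, \ref{lem}, \ref{mainlemmaas}). Stick with your primary route through $I^\alpha T_b=\mathfrak B^\alpha$, which avoids $w$ altogether. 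Second, when you extend $I^\alpha T_b=\mathfrak B^\alpha$ from $C_0^\infty$ to $L^{p(\cdot)}$ by density, you should note explicitly that both sides are continuous from $L^{p(\cdot)}$ into $L^{q(\cdot)}$ (using $G_\alpha(x)\lesssim |x|^{\alpha-n}$ to dominate $\mathfrak B^\alpha$ by $I^\alpha$), so that the equality of the two continuous extensions follows; continuity into $\mathcal S'$ alone is a weaker statement than what you want and leaves an unnecessary seam in the argument.

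Subject to those caveats, the proposal is a correct and self-contained proof of the proposition, arguably more explicit than simply citing \cite{alsamk} as the paper does.
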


\section{Main results}\label{mainresults}

We  first prove that the functions $A(x), B(x)$ defined in
\eqref{3.10} are Fourier $p(\cdot)$-multipliers in
$L^{p(\cdot)}(\mathbb R^n)$ under suitable exponents $p(\cdot)$,
see Theorem \ref{amultiplier},
 which proved
to be the principal difficulty in extending the result in
\eqref{00} to variable exponents.
\begin{theorem}\label{amultiplier}
The function  $A$  is a Fourier $p(\cdot)$-multiplier when $p(\cdot) \in
\mathcal P (\mathbb R^n)$.
\end{theorem}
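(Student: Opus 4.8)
The plan is to reduce the assertion, via Theorem~\ref{mihhor1}, to a Mikhlin-type estimate for the radial profile of $A$. Since $A$ is radial, with $\mathcal{A}(r)$ as in \eqref{conva}, by \eqref{mih1}--\eqref{reducedequiv} and $\mathcal{P}(\mathbb{R}^n)\subset\mathfrak{M}(\mathbb{R}^n)$ it would suffice to show $\bigl|\bigl(r\tfrac{d}{dr}\bigr)^{k}\mathcal{A}(r)\bigr|\le C$ for $k=0,1,\dots,n$. I would split $\mathcal{A}(r)=g(r)/w(r)$ with $g(r):=\bigl((1-\mathrm{e}^{-r})/r\bigr)^{\alpha}$: the factor $(1-\mathrm{e}^{-r})/r$ extends to a real-analytic, strictly positive function on $[0,\infty)$ with value $1$ at the origin, so $g$ is smooth, $g(0)=1$, and for $r\gtrsim1$ one has $g(r)=r^{-\alpha}(1-\mathrm{e}^{-r})^{\alpha}$ with $(1-\mathrm{e}^{-r})^{\alpha}$ differing from $1$ by exponentially flat terms; hence all $\bigl(r\tfrac{d}{dr}\bigr)^{k}g$ are bounded and the whole issue is the factor $1/w(r)$. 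As for $w$ itself: from \eqref{ww}--\eqref{Vr}, $w(r)=c\int_{r}^{\infty}V(\rho)\rho^{-1-\alpha}\,d\rho$ with $V\ge0$ since $\ell$ is even, $V(\rho)=O(\rho^{\ell})$ as $\rho\to0^{+}$ with $\ell>\alpha$, and $V(\rho)\to\lambda>0$; hence $w$ is continuous and strictly positive on $[0,\infty)$ with $w(0)=1$ and $w(r)\sim(c\lambda/\alpha)r^{-\alpha}$ as $r\to\infty$, so $1/w(r)\asymp(1+r)^{\alpha}$. This immediately settles the case $k=0$ and, $w$ being smooth and nonvanishing there, all scaled derivatives on every bounded $r$-interval.

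The main obstacle is the regime $r\to\infty$. By the Leibniz and Fa\`a di Bruno rules, $\bigl(r\tfrac{d}{dr}\bigr)^{k}(1/w)$ equals $w^{-1}$ times a polynomial without constant term in the quantities $u_{j}:=\bigl(r\tfrac{d}{dr}\bigr)^{j}w/w$, so one wants $|u_{j}(r)|\le C$ for $j\le n$; combined with $\bigl(r\tfrac{d}{dr}\bigr)^{j}g=O((1+r)^{-\alpha})$ this would give the Mikhlin bound for $\mathcal{A}$. Writing $w=\tfrac{c\lambda}{\alpha}r^{-\alpha}+cR(r)$ with $R(r)=\int_{r}^{\infty}(V(\rho)-\lambda)\rho^{-1-\alpha}\,d\rho$, the non-oscillatory main part $\tfrac{c\lambda}{\alpha}$ of $h(r):=r^{\alpha}w(r)$ is annihilated by $\bigl(r\tfrac{d}{dr}\bigr)^{k}$, so everything is governed by the scaled derivatives of the oscillatory Bessel remainder $V(\rho)-\lambda=\sum_iC_i(\ell_i\rho)^{-(\nu-1)}J_{\nu-1}(\ell_i\rho)$ from \eqref{sphericalintegral}. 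Here is exactly the difficulty flagged in the Introduction: $J_{\nu-1}(\ell_i\rho)$ oscillates with amplitude $\rho^{-1/2}$, so $(\ell_i\rho)^{-(\nu-1)}J_{\nu-1}(\ell_i\rho)=O(\rho^{-(n-1)/2})$, but each application of $r\tfrac{d}{dr}$ raises the amplitude by a factor $r$; consequently the naive bound for $\bigl(r\tfrac{d}{dr}\bigr)^{k}(1/w)$ grows like $r^{\,k-(n+1)/2}$, and the pointwise Mikhlin condition for $\mathcal{A}$ in fact \emph{fails} once $k>(n+1)/2$ (in particular for $n\ge2$ and $k=n$). So the plan cannot simply be ``verify \eqref{reducedequiv} for $\mathcal{A}$''; the real content is to handle these oscillatory Bessel contributions.

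To get around this I would use a smooth radial cut-off, $\mathcal{A}=\mathcal{A}_{0}+\mathcal{A}_{\infty}$, with $\mathcal{A}_{0}$ supported near the origin (a trivial Mikhlin multiplier by the first paragraph) and, on the support of $\mathcal{A}_{\infty}$, replace $(1-\mathrm{e}^{-r})^{\alpha}$ by $1$ modulo an exponentially flat harmless error and expand $h^{-1}=\tfrac{\alpha}{c\lambda}\sum_{m\ge0}(-\widetilde h)^{m}$ with $\widetilde h:=\tfrac{\alpha}{c\lambda}h-1\to0$; each resulting term is then to be treated not through Mikhlin but through the explicit large-argument asymptotics of $J_{\nu-1}$, separating the coherent oscillatory exponentials $\mathrm{e}^{\pm i\ell_i r}$ with smooth, slowly varying amplitudes and invoking the fact that such Bessel-type oscillatory symbols are Fourier $p(\cdot)$-multipliers when $p(\cdot)\in\mathcal{P}(\mathbb{R}^n)$. (The alternative of exhibiting $A$ directly as the Fourier transform of an $L^1$ function via an explicit kernel for the truncated hypersingular operator is precisely the one obstructed in the variable-exponent setting, the Young convolution theorem being unavailable.) Once $\mathcal{A}_0$ and the smooth main part of $\mathcal{A}_\infty$ are dispatched by Theorem~\ref{mihhor1} (with Lemma~\ref{dilation} added if a uniform-in-$\varepsilon$ statement for $A(\varepsilon\,\cdot)$ is wanted) and the Bessel remainders by the special-structure argument, additivity and multiplicativity of the class of Fourier $p(\cdot)$-multipliers yield the claim. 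I expect essentially all the work, and the only genuine departure from the constant-exponent proof, to be concentrated in controlling the oscillatory Bessel part in the $L^{p(\cdot)}$ norm.
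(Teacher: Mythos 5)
Your overall strategy is on the right track: you correctly identify that the near-origin piece is handled by a routine Mikhlin estimate, and you make the key observation — also made in the paper — that the naive Mikhlin condition fails at infinity once $k>(n+1)/2$, because each application of $r\,d/dr$ raises the amplitude of the Bessel oscillations by a factor of $r$. (The paper works with $\mathcal B=1/\mathcal A$ rather than $\mathcal A$, using Lemmas \ref{inversion}--\ref{nonvanish} for the passage, and employs a three-piece partition of unity rather than two, but these are cosmetic differences.) However, your treatment of the infinity piece stops at the sentence ``invoking the fact that such Bessel-type oscillatory symbols are Fourier $p(\cdot)$-multipliers when $p(\cdot)\in\mathcal P(\mathbb R^n)$.'' No such fact is available to invoke; it is precisely what needs to be proved, and it is where essentially all the work of the paper is concentrated. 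Oscillatory radial symbols of the form $\mu_3(r)\,r^{-\nu}J_{\nu-2}(\ell_i r)$ lie outside the Mikhlin--H\"ormander machinery, and even for constant $p$ the boundedness of multipliers $e^{ic|\xi|}a(|\xi|)$ with $a\sim|\xi|^{-s}$ is a nontrivial wave-multiplier phenomenon with a delicate dependence on $s$, $n$ and $p$. What saves the day here is that the decay rate $r^{-(n+1)/2}$ is large enough to make the associated kernel integrable — and that statement must be demonstrated, not asserted.

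This also means your dismissal of the $L^1$-kernel route as ``obstructed in the variable-exponent setting'' is a red herring: that route, suitably strengthened, is exactly how the paper proceeds for the infinity piece. What the paper establishes (Lemma \ref{mainlemmaas}, supported by the asymptotics in Lemmas \ref{lem0}--\ref{lem} and the Bessel integration-by-parts identity of Lemma \ref{inserted}) is not merely that the kernel $a_3$ of $\mathcal B_3(r)-\mathcal B_3(\infty)$ is in $L^1$, but the stronger estimate $|a_3(r)|\leqq C\,r^{-(n-1)/2}(1+r)^{-m}$ for every $m$, i.e.\ an integrable radial nonincreasing majorant. This places the convolution under pointwise domination by the Hardy--Littlewood maximal function (cf.\ Proposition \ref{urifio}), which is bounded on $L^{p(\cdot)}$ for $p\in\mathfrak M(\mathbb R^n)$, entirely bypassing Young's inequality. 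Proving that kernel estimate — via the Fourier--Bessel inversion formula, the regularized integration by parts of \eqref{3}, and the closed-form integrals of products of Bessel functions from \cite{graryz} — is the genuine content of the theorem, and it is missing from your outline.
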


Then by means of Theorem \ref{amultiplier} we prove the following
statements.

\begin{theorem}\label{main} Let $f \in L^{p(\cdot)}(\mathbb
R^n),\; p(\cdot) \in \mathcal P(\mathbb R^n)$  and let $\mathbb
D^\alpha_\varepsilon f$ be the truncated hypersingular integral
\eqref{truncatedhypersingular}. The limits
\begin{equation}\label{principal} \lim_{\varepsilon\to
0+} \frac{1}{\varepsilon^\alpha}(I-P_\varepsilon)^\alpha f \quad \
\textrm{and} \ \quad \lim_{\varepsilon \to 0+} \mathbb
D^\alpha_\varepsilon f \end{equation} exist in
$L^{p(\cdot)}(\mathbb R^n)$ simultaneously and coincide with each
other.
\end{theorem}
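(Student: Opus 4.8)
The plan is to transfer the whole question to the Fourier side, where both families of operators become multiplication operators whose symbols differ by exactly the factor $A(\ve\cdot)$ from \eqref{3.10}. For $f\in C_0^\infty(\rn)$ one has $\widehat{P_tf}(x)=\me^{-t|x|}\widehat f(x)$, so by \eqref{2.5} and the absolute convergence of $\sum_k\bigl|\binom{\al}{k}\bigr|$ we get $\widehat{(I-P_\ve)^\al f}(x)=(1-\me^{-\ve|x|})^\al\,\widehat f(x)$, while \eqref{1.10} (recall $\widehat{K_{\ell,\al}}=w(|\cdot|)$) gives $\widehat{\mathbb D^\al_\ve f}(x)=w(\ve|x|)\,|x|^\al\,\widehat f(x)$. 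Using $A(\ve x)=(1-\me^{-\ve|x|})^\al\big/\bigl((\ve|x|)^\al w(\ve|x|)\bigr)$, these two identities combine into
\begin{equation}\label{pp1}
\frac1{\ve^\al}(I-P_\ve)^\al f=T_{A(\ve\cdot)}\,\mathbb D^\al_\ve f,\qquad
\mathbb D^\al_\ve f=T_{B(\ve\cdot)}\Bigl(\tfrac1{\ve^\al}(I-P_\ve)^\al f\Bigr),
\end{equation}
first for $f\in C_0^\infty(\rn)$.

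Next I would upgrade \eqref{pp1} to all $f\in L^{\pc}(\rn)$ by density of $C_0^\infty(\rn)$ in $L^{\pc}(\rn)$ (valid for $\pc\in\cP(\rn)$). For this it suffices to note that, for each fixed $\ve>0$, every operator occurring in \eqref{pp1} is bounded on $L^{\pc}$: the operator $\ve^{-\al}(I-P_\ve)^\al$ by \eqref{2.6}; the operators $T_{A(\ve\cdot)}$ and $T_{B(\ve\cdot)}$ by Lemma \ref{dilation} together with Theorem \ref{mihhor1}, since $A$ and $B$ satisfy Mikhlin's condition \eqref{mih1} (which is what the proof of Theorem \ref{amultiplier} establishes); and the truncated hypersingular operator $\mathbb D^\al_\ve$ because, for fixed $\ve$, it is the Fourier $\pc$-multiplier operator with symbol $w(\ve|x|)|x|^\al=\ve^{-\al}(\ve|x|)^\al w(\ve|x|)$, which by Lemma \ref{dilation} is an $\ve$-dilate of the radial Mikhlin multiplier $r\mapsto r^\al w(r)=\mathcal B(r)(1-\me^{-r})^\al$. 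Moreover, by Lemma \ref{dilation} the families $\{T_{A(\ve\cdot)}\}_{\ve>0}$ and $\{T_{B(\ve\cdot)}\}_{\ve>0}$ are \emph{uniformly} bounded on $L^{\pc}$.

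Finally, the equivalence of the two limits in \eqref{principal} follows from \eqref{pp1} by a soft $\ve/3$-type argument. Assume $\mathbb D^\al_\ve f\to g$ in $L^{\pc}$ as $\ve\to0$; then, by \eqref{pp1},
\begin{equation}\label{pp2}
\frac1{\ve^\al}(I-P_\ve)^\al f-g=T_{A(\ve\cdot)}\bigl(\mathbb D^\al_\ve f-g\bigr)+\bigl(T_{A(\ve\cdot)}g-g\bigr),
\end{equation}
where the first term tends to $0$ by the uniform boundedness of $T_{A(\ve\cdot)}$ and the second by Lemma \ref{12}, because $A(\ve x)\to A(0)=1$ for a.e.\ $x$ as $\ve\to0$ (recall $(1-\me^{-r})^\al/r^\al\to1$ and $w(0)=1$, cf.\ \eqref{1.12}). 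Hence $\ve^{-\al}(I-P_\ve)^\al f\to g$ in $L^{\pc}$. Conversely, if $\ve^{-\al}(I-P_\ve)^\al f\to h$ in $L^{\pc}$, the second identity in \eqref{pp1}, the uniform boundedness of $T_{B(\ve\cdot)}$ and Lemma \ref{12} (now with $B(\ve x)\to B(0)=1$) give $\mathbb D^\al_\ve f\to h$ by the same splitting; so the two limits exist simultaneously and coincide. The one genuinely hard ingredient is Theorem \ref{amultiplier}; granting it, the only point requiring care in the argument above is the boundedness, for fixed $\ve$, of $\mathbb D^\al_\ve$ on $L^{\pc}$, which is what legitimizes the passage from $C_0^\infty(\rn)$ to $L^{\pc}(\rn)$ in \eqref{pp1}.
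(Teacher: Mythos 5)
Your proposal is correct and follows essentially the same route as the paper: you pass to the Fourier side to obtain the identity $\ve^{-\al}(I-P_\ve)^\al f = T_{A(\ve\cdot)}\mathbb D^\al_\ve f$ (the paper's \eqref{5.4}), extend it from $C_0^\infty$ to $L^{\pc}$ by boundedness, and then run the splitting with uniform boundedness of $T_{A(\ve\cdot)}$ (resp.\ $T_{B(\ve\cdot)}$) plus Lemma~\ref{12} --- exactly the paper's estimate \eqref{new} and the converse via \eqref{obratno}. The one place you are more explicit than the paper is in justifying, for fixed $\ve$, that $\mathbb D^\al_\ve$ is bounded on $L^{\pc}$ via its symbol $w(\ve|\cdot|)|\cdot|^\al$, which is a useful clarification but not a departure in method.
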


\begin{corollary}\label{cor} Let $\al>0$ and $p\in \mathcal{P}(\rn)$.
The equivalent characterization of the space $L^{\alpha,p(\cdot)}(\mathbb
R^n)$ defined in \eqref{inicio}, is given by
\[L^{\alpha,p(\cdot)}(\mathbb R^n)=\left\{f\in L^{p(\cdot)}(\rn): \  \lim_{\varepsilon\to
0+} \frac{1}{\varepsilon^\alpha}(I-P_\varepsilon)^\alpha f\in
L^{p(\cdot)}(\rn)\right\}.\]

\end{corollary}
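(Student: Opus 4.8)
The plan is to obtain Corollary \ref{cor} as an immediate consequence of Theorem \ref{main} together with the definition \eqref{inicio}. First I would recall the standing convention (the one adopted right after \eqref{truncatedhypersingular}) that the Riesz fractional derivative $\mathbb D^\alpha f$ occurring in \eqref{inicio} is to be understood as the limit in the norm of $L^{p(\cdot)}(\mathbb R^n)$ of the truncated hypersingular integrals $\mathbb D^\alpha_\varepsilon f$ given by \eqref{truncatedhypersingular}. Under this convention, $f\in L^{\alpha,p(\cdot)}(\mathbb R^n)$ precisely when $f\in L^{p(\cdot)}(\mathbb R^n)$ and the limit $\lim_{\varepsilon\to 0+}\mathbb D^\alpha_\varepsilon f$ exists in $L^{p(\cdot)}(\mathbb R^n)$; in that case $\mathbb D^\alpha f$ denotes this limit and automatically belongs to $L^{p(\cdot)}(\mathbb R^n)$.

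Next I would invoke Theorem \ref{main}: since $p(\cdot)\in\mathcal P(\mathbb R^n)$ and $f\in L^{p(\cdot)}(\mathbb R^n)$, the limit $\lim_{\varepsilon\to 0+}\mathbb D^\alpha_\varepsilon f$ exists in $L^{p(\cdot)}(\mathbb R^n)$ if and only if $\lim_{\varepsilon\to 0+}\frac{1}{\varepsilon^\alpha}(I-P_\varepsilon)^\alpha f$ exists in $L^{p(\cdot)}(\mathbb R^n)$, and when they exist the two limits coincide. Feeding this equivalence into the previous paragraph yields: $f\in L^{\alpha,p(\cdot)}(\mathbb R^n)$ if and only if $f\in L^{p(\cdot)}(\mathbb R^n)$ and $\lim_{\varepsilon\to 0+}\frac{1}{\varepsilon^\alpha}(I-P_\varepsilon)^\alpha f$ exists in $L^{p(\cdot)}(\mathbb R^n)$. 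In the latter case that limit is, by definition, $\mathbb D^\alpha f$, hence lies in $L^{p(\cdot)}(\mathbb R^n)$, which is exactly how the set on the right-hand side of the asserted identity is to be read. This proves the set-theoretic equality, and since the limiting objects agree, the identification also respects the natural (semi)norm $f\mapsto\|f\|_{p(\cdot)}+\|\mathbb D^\alpha f\|_{p(\cdot)}$.

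I do not expect a genuine obstacle here, as the corollary is a one-line deduction from Theorem \ref{main}. The only point requiring care is purely a matter of bookkeeping: one must make sure that the notion of convergence used implicitly in \eqref{inicio}, the one appearing in Theorem \ref{main}, and the one meant by the membership $\lim_{\varepsilon\to 0+}\frac{1}{\varepsilon^\alpha}(I-P_\varepsilon)^\alpha f\in L^{p(\cdot)}(\mathbb R^n)$ in the statement of the corollary are all the same, namely convergence in the $L^{p(\cdot)}(\mathbb R^n)$-norm; once this is spelled out, the ``if and only if'' on the two sides is literally one and the same statement.
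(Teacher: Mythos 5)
Your proposal is correct and matches the paper's intent: the paper offers no explicit proof for Corollary~\ref{cor}, treating it as an immediate consequence of Theorem~\ref{main} once one recalls that $\mathbb D^\alpha f$ in \eqref{inicio} is by convention the $L^{p(\cdot)}$-norm limit of the truncations $\mathbb D^\alpha_\varepsilon f$ from \eqref{truncatedhypersingular}. Your bookkeeping remark about making sure all three limits are understood in the $L^{p(\cdot)}$-norm is exactly the only point that needs spelling out, and you spell it out.
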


\begin{theorem}\label{main2}
Let $0< \alpha <n$, $1<p_- \leqq p_+ <n/ \alpha$, $p(\cdot)\in \mathcal
P(\mathbb R^n)$. A function $f\in L^{p(\cdot)}(\mathbb R^n) $ belongs to
$L^{\alpha, p(\cdot)}(\mathbb R^n)$ if and only if
\begin{equation}\label{vdnh}
\|(I-P_\ve)^\alpha f\|_{p(\cdot)}\leqq C \ve^\al,
\end{equation}
where $C$ does not depend on  $\ve$; condition \eqref{vdnh} being fulfilled,
it involves that $\mathbb{D}^\al f\in L^{p(\cdot)}(\rn)$ and \eqref{vdnh} is
also valid in the form $\|(I-P_\ve)^\alpha f\|_{p(\cdot)}\leqq C \|\mathbb{D}^\al
f\|_{p(\cdot)}\ve^\al$ where $C$ does not depend on $f$ and $\ve$.
\end{theorem}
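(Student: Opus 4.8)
The plan is to reduce both implications to the already-established Theorem \ref{main} together with the hypersingular characterization of the Bessel potential space in the under-limiting range. Recall that on the Fourier side the truncated hypersingular integral acts as multiplication by $\widehat{K_{\ell,\alpha}}(\varepsilon x)|x|^\alpha = w(\varepsilon|x|)|x|^\alpha$, while $(I-P_\varepsilon)^\alpha$ acts (on nice $f$) as multiplication by $(1-\me^{-\varepsilon|x|})^\alpha$. Hence, writing things through the functions $A,B$ of \eqref{3.10}, we have the symbolic identity
\begin{equation}\label{keyid}
\frac{1}{\varepsilon^\alpha}(I-P_\varepsilon)^\alpha f
= T_{B(\varepsilon\,\cdot)}\,\mathbb D^\alpha_\varepsilon f,
\qquad
\mathbb D^\alpha_\varepsilon f = T_{A(\varepsilon\,\cdot)}\,\frac{1}{\varepsilon^\alpha}(I-P_\varepsilon)^\alpha f,
\end{equation}
which should first be justified rigorously on a dense set (e.g.\ $C_0^\infty$) and then extended to $L^{p(\cdot)}$ using that, by Theorem \ref{amultiplier} and Lemma \ref{dilation}, both $A$ and $B$ are Fourier $p(\cdot)$-multipliers with dilation-uniform bounds (one needs $B$ too, which is obtained the same way as Theorem \ref{amultiplier}, or is part of that theorem's machinery). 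The point of \eqref{keyid} is that $\|(I-P_\varepsilon)^\alpha f\|_{p(\cdot)} \asymp \varepsilon^\alpha \|\mathbb D^\alpha_\varepsilon f\|_{p(\cdot)}$ uniformly in $\varepsilon$.

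For the ``only if'' direction: if $f\in L^{\alpha,p(\cdot)}(\mathbb R^n)$, then by definition $\mathbb D^\alpha f\in L^{p(\cdot)}$, i.e.\ $\mathbb D^\alpha_\varepsilon f\to \mathbb D^\alpha f$ in $L^{p(\cdot)}$; in particular $\sup_\varepsilon\|\mathbb D^\alpha_\varepsilon f\|_{p(\cdot)}<\infty$. Combining with the uniform multiplier bound for $B(\varepsilon\,\cdot)$ in \eqref{keyid} gives $\|(I-P_\varepsilon)^\alpha f\|_{p(\cdot)}\leqq C\varepsilon^\alpha\|\mathbb D^\alpha_\varepsilon f\|_{p(\cdot)}\leqq C'\varepsilon^\alpha$, and the sharper form $\|(I-P_\varepsilon)^\alpha f\|_{p(\cdot)}\leqq C\|\mathbb D^\alpha f\|_{p(\cdot)}\varepsilon^\alpha$ follows once one notes $\|\mathbb D^\alpha_\varepsilon f\|_{p(\cdot)}\to\|\mathbb D^\alpha f\|_{p(\cdot)}$.

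For the ``if'' direction, which is the real work: assume \eqref{vdnh}. By \eqref{keyid} and the uniform multiplier bound for $A(\varepsilon\,\cdot)$ we get $\|\mathbb D^\alpha_\varepsilon f\|_{p(\cdot)}\leqq C\varepsilon^{-\alpha}\|(I-P_\varepsilon)^\alpha f\|_{p(\cdot)}\leqq C'$, uniformly in $\varepsilon$. Now invoke Proposition \ref{rieszrange1}: the uniform bound $\|\mathbb D^\alpha_\varepsilon f\|_{p(\cdot)}\leqq C$, valid in the range $1<p_-\leqq p_+<n/\alpha$, forces $f\in I^\alpha[L^{p(\cdot)}]$; together with $f\in L^{p(\cdot)}$ this yields, via Proposition \ref{besselcharacterization}, that $f\in\mathfrak B^\alpha[L^{p(\cdot)}]=L^{\alpha,p(\cdot)}$, so $\mathbb D^\alpha f$ exists in $L^{p(\cdot)}$ (one can also apply Proposition \ref{alm} to identify it). Finally, feeding $f\in L^{\alpha,p(\cdot)}$ back into the ``only if'' direction produces the refined inequality. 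I expect the main obstacle to be the rigorous passage from the formal Fourier identities \eqref{keyid} on $C_0^\infty$ to genuine operator identities on all of $L^{p(\cdot)}$ — one must make sure the relevant quantities ($\mathbb D^\alpha_\varepsilon f$, $(I-P_\varepsilon)^\alpha f$) lie in spaces where the multiplier theorem applies and that the approximation by smooth functions is compatible with the (non-translation-invariant) $L^{p(\cdot)}$ norm; the uniform-in-$\varepsilon$ multiplier estimates from Lemma \ref{dilation} and Theorem \ref{amultiplier} are what make this go through.
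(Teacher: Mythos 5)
Your proof takes essentially the same route as the paper: for the ``only if'' direction you invoke Theorem \ref{main} (equivalently, the uniform multiplier identity between $\frac{1}{\ve^\al}(I-P_\ve)^\al f$ and $\mathbb D^\al_\ve f$); for the ``if'' direction you pass from \eqref{vdnh} to the uniform bound $\|\mathbb D^\al_\ve f\|_{p(\cdot)}\leqq C$ via Theorem \ref{amultiplier} and Lemma \ref{dilation}, and then invoke Propositions \ref{rieszrange1} and \ref{alm} (the paper cites the same two, and your Proposition \ref{besselcharacterization} route is equivalent). The $A\leftrightarrow B$ swap in your identity \eqref{keyid} is harmless since both are shown to be multipliers.

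One point deserves tightening: for the refined inequality $\|(I-P_\ve)^\al f\|_{p(\cdot)}\leqq C\|\mathbb D^\al f\|_{p(\cdot)}\ve^\al$ with $C$ independent of $f$, the remark that ``$\|\mathbb D^\al_\ve f\|\to\|\mathbb D^\al f\|$'' does not suffice — that only controls the bound asymptotically as $\ve\to 0$ and with an $f$-dependent constant. What is actually needed is a uniform-in-$\ve$, uniform-in-$f$ bound $\|\mathbb D^\al_\ve f\|_{p(\cdot)}\leqq C\|\mathbb D^\al f\|_{p(\cdot)}$. This is available: on the Fourier side $F(\mathbb D^\al_\ve f)=w(\ve|x|)\,F(\mathbb D^\al f)$ and $w(\ve\cdot)=\widehat{K_{\ell,\al}}(\ve\cdot)$ is the Fourier transform of the dilated $L^1$ kernel $\ve^{-n}K_{\ell,\al}(\cdot/\ve)$, a potential-type approximate identity, so Proposition \ref{urifio}(i) gives the required uniform bound. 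Equivalently one can observe that the radial function $\bigl(\frac{1-\me^{-r}}{r}\bigr)^\al=A(r)w(r)$ satisfies the Mikhlin condition uniformly under dilation. Either of these closes the gap and is the intended content of the paper's terse ``consequence of Theorem \ref{main}''.
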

\begin{theorem}\label{main1}
Let $0< \alpha <n$, $1<p_- \leqq p_+ <n/ \alpha$ and $p(\cdot)\in
\mathcal P(\mathbb R^n)$.
 The variable exponent Bessel potential space $\mathfrak B^\alpha(L^{p(\cdot)})$
 is the subspace in  $L^{p(\cdot)}(\mathbb R^n)$ of functions $f$ for which the limit
$\lim_{\varepsilon \to 0+}
\frac{1}{\ve^\al}(I-P_\varepsilon)^\alpha f $ exists.
\end{theorem}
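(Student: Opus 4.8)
The plan is to obtain Theorem \ref{main1} by splicing together two facts already at our disposal: the characterization of $L^{\alpha,p(\cdot)}(\mathbb R^n)$ in Corollary \ref{cor}, and the coincidence of the Bessel potential space with the Sobolev-type space $L^{\alpha,p(\cdot)}(\mathbb R^n)$, which is available precisely in the under-limiting case $1<p_-\leqq p_+<n/\alpha$ assumed here. Indeed, since $p(\cdot)\in\mathcal P(\mathbb R^n)$ (so that $p$ obeys \eqref{llc} and \eqref{decaycondition2}) and $p_+<n/\alpha$, Proposition \ref{besselcharacterization} identifies $\mathfrak B^\alpha(L^{p(\cdot)})$ with $L^{p(\cdot)}\cap I^\alpha(L^{p(\cdot)})$, and the latter coincides with $L^{\alpha,p(\cdot)}(\mathbb R^n)$, this being the identification \eqref{inicio} justified in \cite{alsamk}. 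On the other hand Corollary \ref{cor}, which needs only $p(\cdot)\in\mathcal P(\mathbb R^n)$, says that $L^{\alpha,p(\cdot)}(\mathbb R^n)$ consists exactly of the $f\in L^{p(\cdot)}(\mathbb R^n)$ for which $\lim_{\varepsilon\to 0+}\frac{1}{\varepsilon^\alpha}(I-P_\varepsilon)^\alpha f$ exists in $L^{p(\cdot)}(\mathbb R^n)$. Chaining these equalities gives the theorem.

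It is instructive to let the two inclusions run through the available propositions directly. If $f\in\mathfrak B^\alpha(L^{p(\cdot)})$, then by Proposition \ref{besselcharacterization} we may write $f=I^\alpha\psi$ with $\psi\in L^{p(\cdot)}(\mathbb R^n)$, whence Proposition \ref{alm} gives $\mathbb D^\alpha_\varepsilon f\to\psi$ in $L^{p(\cdot)}(\mathbb R^n)$; Theorem \ref{main} then shows that $\frac{1}{\varepsilon^\alpha}(I-P_\varepsilon)^\alpha f$ converges in $L^{p(\cdot)}(\mathbb R^n)$ as well (necessarily to $\psi$), so the required limit exists. Conversely, assume $\frac{1}{\varepsilon^\alpha}(I-P_\varepsilon)^\alpha f$ converges in $L^{p(\cdot)}(\mathbb R^n)$. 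By Theorem \ref{main} the truncated hypersingular integrals $\mathbb D^\alpha_\varepsilon f$ converge in $L^{p(\cdot)}(\mathbb R^n)$, hence $\sup_{\varepsilon>0}\|\mathbb D^\alpha_\varepsilon f\|_{p(\cdot)}<\infty$; combined with the membership $f\in L^{q(\cdot)}$, $\frac{1}{q(\cdot)}=\frac{1}{p(\cdot)}-\frac{\alpha}{n}$, supplied by the Sobolev embedding $L^{\alpha,p(\cdot)}\hookrightarrow L^{q(\cdot)}$ of \cite{alsamk}, Proposition \ref{rieszrange1} yields $f\in I^\alpha(L^{p(\cdot)})$, and then $f\in\mathfrak B^\alpha(L^{p(\cdot)})$ again by Proposition \ref{besselcharacterization}.

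Since all the semigroup-specific analysis has already been carried out in Theorem \ref{main} (and, underneath it, in the Fourier multiplier Theorem \ref{amultiplier}), the proof of Theorem \ref{main1} is essentially an assembly of Corollary \ref{cor} with Propositions \ref{besselcharacterization}, \ref{alm} and \ref{rieszrange1}. The one step that is not pure bookkeeping is the identification $\mathfrak B^\alpha(L^{p(\cdot)})=L^{\alpha,p(\cdot)}(\mathbb R^n)$ — equivalently, the fact that $\mathbb D^\alpha f\in L^{p(\cdot)}$ for $f\in L^{p(\cdot)}$ forces $f$ into the range $I^\alpha(L^{p(\cdot)})$, which rests on the inversion $\mathbb D^\alpha I^\alpha=I$ of Proposition \ref{alm} and on the embedding alluded to above; this is exactly the place where the restriction $p_+<n/\alpha$ enters and where a reader should expect the only real subtlety to lie.
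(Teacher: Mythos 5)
Your proof is correct and follows essentially the same route as the paper: identify $\mathfrak B^\alpha(L^{p(\cdot)})$ with $L^{\alpha,p(\cdot)}(\mathbb R^n)$ (via Proposition~\ref{besselcharacterization} and the inversion/range results) and then apply Theorem~\ref{main} (equivalently Corollary~\ref{cor}). The paper states this in one line, citing only Theorem~\ref{main} and Propositions~\ref{besselcharacterization} and \ref{alm}, whereas you also spell out the backward inclusion, correctly observing that Proposition~\ref{alm} alone (a left inverse, $\mathbb D^\alpha I^\alpha=I$) does not put $f$ into the range $I^\alpha(L^{p(\cdot)})$, and that one really needs Proposition~\ref{rieszrange1}. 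This is a legitimate and useful filling-in of a step the paper treats as ``immediate''. The one place to be careful is your appeal to the Sobolev embedding $L^{\alpha,p(\cdot)}\hookrightarrow L^{q(\cdot)}$ from \cite{alsamk}: you must make sure this embedding is established there independently of the coincidence $\mathfrak B^\alpha(L^{p(\cdot)})=L^{\alpha,p(\cdot)}$, since otherwise the argument would be circular; the paper's own proof of Theorem~\ref{main2} is equally silent on how the $L^{q(\cdot)}$-membership in Proposition~\ref{rieszrange1} is supplied, so you are not introducing a new weakness, only making an implicit one visible.
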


\section{Crucial lemmata}\label{crucial}
We start with the following two simple  lemmas.
\begin{lemma}\label{inversion}
Let a function $\mathcal{M}(r)$ satisfy Mikhlin condition
\eqref{reduced} and $\inf_{x\in\mathbb{R}^1_+}|\mathcal{M}(r)|>0$.
Then the function $\frac{1}{\mathcal{M}(r)}$ satisfies the same
condition.
\end{lemma}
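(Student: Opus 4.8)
The plan is to verify condition \eqref{reduced} directly for $\frac{1}{\mathcal M(r)}$, using the equivalent form \eqref{reducedequiv} in terms of the operator $\delta = r\frac{d}{dr}$, which behaves multiplicatively under composition and makes the combinatorics cleaner. First I would record the hypothesis in the form $|\delta^k \mathcal M(r)| \leqq C$ for $k = 0,1,\ldots,n$, and $m_0 := \inf_{r>0}|\mathcal M(r)| > 0$, so in particular $\mathcal M(r) \neq 0$ everywhere on $\mathbb R^1_+$ and $|1/\mathcal M(r)| \leqq 1/m_0$, which settles the case $k=0$.

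For $k \geqq 1$ the key step is an induction (or a Faà di Bruno / chain-rule computation) showing that $\delta^k\left(\frac{1}{\mathcal M}\right)$ is a finite linear combination, with absolute numerical coefficients, of terms of the form
\[
\frac{1}{\mathcal M^{j+1}} \prod_{i=1}^{j} \delta^{a_i}\mathcal M,
\qquad a_i \geqq 1, \quad a_1 + \cdots + a_j = k, \quad 1 \leqq j \leqq k .
\]
This is the standard structure one gets by repeatedly applying $\delta$ to $1/\mathcal M$ via $\delta(1/\mathcal M) = -(\delta \mathcal M)/\mathcal M^2$ and the product rule; an easy induction on $k$ confirms that applying $\delta$ once more either raises one of the exponents $a_i$ by $1$ or splits off a new factor $\delta \mathcal M$ while increasing the power of $1/\mathcal M$ by one. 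Each such term is then bounded: $|1/\mathcal M^{j+1}| \leqq m_0^{-(j+1)}$ and each $|\delta^{a_i}\mathcal M| \leqq C$ since $1 \leqq a_i \leqq k \leqq n$, so the whole term is bounded by $C^j m_0^{-(j+1)}$, and summing the finitely many terms gives $|\delta^k(1/\mathcal M)(r)| \leqq C'$ with $C'$ depending only on $C$, $m_0$, $n$. By \eqref{reducedequiv} this is exactly condition \eqref{reduced} for $1/\mathcal M$.

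I do not expect any real obstacle here; the only mildly delicate point is bookkeeping of the term structure in the inductive step, and that is precisely why passing to the variable $\delta = r\frac{d}{dr}$ is worthwhile — it removes the extraneous powers of $r$ that would otherwise clutter a direct induction with $r^k \frac{d^k}{dr^k}$. One could alternatively phrase the estimate as: on the compact-in-spirit statement "$\mathcal M$ lies in the multiplicatively-invariant algebra of functions satisfying \eqref{reducedequiv} up to order $n$, which is closed under reciprocals of elements bounded away from $0$," but the explicit term count above is the cleanest self-contained argument and is what I would write out.
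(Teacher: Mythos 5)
Your proof is correct and is in substance the same argument as the paper's. The paper disposes of the lemma by pointing to the Appendix formula \eqref{nth}--\eqref{ultima}, which writes $\frac{d^k}{dx^k}\bigl(\frac{1}{v}\bigr)$ as a sum of terms, each a product of ordinary derivatives of $v$ (of orders summing to $k$) divided by a power of $v$; multiplying by $r^k$ and invoking \eqref{reduced} for $\mathcal M$ together with the lower bound $\inf|\mathcal M|>0$ then bounds each term, although the paper leaves that final estimate to the reader. You reach the same structural identity, but phrased in the operator $\delta = r\frac{d}{dr}$ via the equivalence \eqref{reduced}$\Longleftrightarrow$\eqref{reducedequiv} that the paper itself records, and you supply the short induction and the term-by-term estimate explicitly. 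The pay-off of your variant is purely expository but genuine: the powers of $r$ demanded by \eqref{reduced} are absorbed automatically into $\delta$, the Leibniz bookkeeping in the induction becomes trivial, and the argument is self-contained rather than deferring to the Appendix. There is no gap; your case $k=0$ handling, the induction showing $\delta^k(1/\mathcal M)$ is a finite combination of $\mathcal M^{-(j+1)}\prod_i \delta^{a_i}\mathcal M$ with $a_i\geqq 1$, $\sum a_i = k$, and the resulting bound $C^j m_0^{-(j+1)}$ are all correct and suffice.
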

\begin{proof}
Statements of such a kind are well known, we just note that the assertion of
the lemma follows from \eqref{nth}-\eqref{ultima}.
\end{proof}
\begin{lemma}\label{nonvanish}
The function $\mathcal{B}(r)$ defined in \eqref{conva} is
non-vanishing: \\ $\inf_{r\in \mathbb{R}^1_+}|\mathcal{B}(r)|>0$.
\end{lemma}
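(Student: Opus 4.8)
The plan is to show that $\mathcal{B}(r)=\dfrac{r^\alpha w(r)}{(1-\me^{-r})^\alpha}$ stays bounded away from $0$ by splitting the half-line into a neighbourhood of the origin, a neighbourhood of infinity, and the compact part in between. On the compact interval the function is continuous, so the only real work is at the two ends; the crux is understanding the behaviour of $w(r)$ as $r\to 0+$ and as $r\to\infty$.

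First I would examine $r\to 0+$. Since $(1-\me^{-r})^\alpha\sim r^\alpha$ as $r\to 0$, we have $\mathcal{B}(r)\sim w(r)$ near the origin, so it suffices to show $w(0+)>0$. From \eqref{ww}, $w(0)=c\int_{|y|>0}\frac{\sin^\ell(y_1)}{|y|^{n+\alpha}}\,dy$, and by the very construction of the normalizing constant $d_{n,\ell}(\alpha)$ (recall $\widehat{K_{\ell,\alpha}}(x)=w(|x|)$ satisfies \eqref{1.12}, i.e. $\int K_{\ell,\alpha}=1$, which forces $\widehat{K_{\ell,\alpha}}(0)=w(0)=1$). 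Thus $w(0+)=1$ and $\mathcal{B}(0+)=1$, so $\mathcal{B}$ is bounded away from $0$ on a neighbourhood of the origin; here I use continuity of $w$ at $0$, which follows from $K_{\ell,\alpha}\in L^1$.

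Next I would treat $r\to\infty$. Here $(1-\me^{-r})^\alpha\to 1$, so $\mathcal{B}(r)\sim r^\alpha w(r)$, and I need a lower bound on $r^\alpha w(r)$ as $r\to\infty$. Using \eqref{ww}–\eqref{Vr} together with the explicit formula \eqref{sphericalintegral} from Lemma \ref{lemma}, write $V(\rho)=\lambda+\sum_{i=0}^{\ell/2-1}C_i\,\dfrac{J_{\nu-1}(\ell_i\rho)}{(\ell_i\rho)^{\nu-1}}$ with $\lambda>0$. Plugging this into $w(r)=c\int_r^\infty \rho^{-1-\alpha}V(\rho)\,d\rho$, the constant term contributes $c\lambda\int_r^\infty\rho^{-1-\alpha}\,d\rho=\dfrac{c\lambda}{\alpha}\,r^{-\alpha}$, which after multiplication by $r^\alpha$ gives the strictly positive constant $c\lambda/\alpha$. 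The remaining terms involve $\int_r^\infty\rho^{-1-\alpha}\dfrac{J_{\nu-1}(\ell_i\rho)}{(\ell_i\rho)^{\nu-1}}\,d\rho$; since $J_{\nu-1}(t)=O(t^{-1/2})$ as $t\to\infty$, each such integrand is $O(\rho^{-1-\alpha-\nu+1/2})$, so the integral is $O(r^{-\alpha-\nu-1/2})$ — these pieces, multiplied by $r^\alpha$, tend to $0$. Hence $r^\alpha w(r)\to c\lambda/\alpha>0$ as $r\to\infty$, so $\mathcal{B}(r)$ is bounded away from $0$ near infinity as well. (One must also record that $c\lambda>0$: $\lambda$ as given in \eqref{values} is a positive ratio of Gamma values, and $c=(2\mathrm{i})^\ell/d_{n,\ell}(\alpha)$ is real and of the right sign since $\ell$ is even and $d_{n,\ell}(\alpha)$ carries a compensating factor — this positivity is exactly what makes $w(0)=1>0$, so it is consistent.)

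Finally, on any compact interval $[\delta,R]\subset(0,\infty)$ the function $\mathcal{B}$ is continuous and the denominator $(1-\me^{-r})^\alpha$ is bounded below by a positive constant; provided $w(r)$ does not vanish on $[\delta,R]$ we conclude $\inf_{[\delta,R]}|\mathcal{B}|>0$. That $w$ is nonvanishing on $(0,\infty)$ can be read off from \eqref{ww}: $w$ is continuous, $w(0+)=1$, $w(\infty)=0^+$ with $r^\alpha w(r)\to c\lambda/\alpha>0$, and more directly one checks $V(\rho)\ge 0$ (it is an average of $\sin^\ell$ with even $\ell$) so that $w(r)=c\int_r^\infty\rho^{-1-\alpha}V(\rho)\,d\rho>0$ for all $r\ge 0$ once the sign of $c$ is fixed as above. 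Combining the three pieces — neighbourhood of $0$, neighbourhood of $\infty$, and the compact middle — gives $\inf_{r\in\mathbb{R}^1_+}|\mathcal{B}(r)|>0$.

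The main obstacle I anticipate is the asymptotic analysis at infinity: one has to be careful that the oscillatory Bessel contributions to $w(r)$ are genuinely of lower order than the $\lambda$-term after integration against $\rho^{-1-\alpha}$, and one must pin down the sign/positivity of the leading constant $c\lambda/\alpha$ from the explicit formulas \eqref{values} and the definition of $d_{n,\ell}(\alpha)$. The behaviour near $0$ is comparatively painless, resting only on the normalization $\int K_{\ell,\alpha}=1$ and the $L^1$-continuity of the Fourier transform.
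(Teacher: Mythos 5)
Your proposal is correct and follows essentially the same route as the paper: continuity in the interior, the normalization $\int K_{\ell,\alpha}=1$ (i.e.\ $w(0)=1$) to get $\mathcal B(0+)=1$, and the asymptotics of $r^\alpha w(r)$ coming from the Bessel expansion of $V(\rho)$ to identify a nonzero limit $\mathcal B(\infty)=\lambda/\alpha$ (the paper delegates the latter to Lemma~\ref{lem0}). The one place you go beyond what the paper records is the intermediate nonvanishing: the paper simply asserts $|\mathcal B(r)|>0$ on $(0,\infty)$, whereas you supply the reason --- $V(\rho)\geqq 0$ because $\ell$ is even, hence $\int_r^\infty\rho^{-1-\alpha}V(\rho)\,d\rho>0$, together with the sign of $c$ fixed by $w(0)=1$ --- which is a welcome, if small, tightening of the argument.
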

\begin{proof}
The function $\mathcal{B}(r)$ is continuous in $(0,\infty)$ and
$|\mathcal{B}(r)|>0$ for all $r\in (0,\infty)$. Therefore, it suffices to
check that $\mathcal{B}(0)\ne 0$ and $\mathcal{B}(\infty)\ne 0$. From
\eqref{1.12} it follows that $\mathcal{B}(0)= 1$, while
$\mathcal{B}(\infty)=\frac{\lb}{\al}\ne 0$ is seen from the asymptotics
\eqref{pro} proved in Lemma \ref{lem0}.
\end{proof}

\begin{lemma}\label{inserted} The following formula is valid
\begin{multline}\label{3}
\intl_0^\infty f(t)t^\nu J_{\nu-1}(rt)\,d t=\\
 \quad=\frac{(-1)^m}{r^m}
\sum\limits_{k=1}^mc_{k,m}\intl_0^\infty f^{(k)}(t)t^{\nu+k-m}
J_{\nu+m-1}(rt)\,d t, \ \quad m\geqq 1,
\end{multline}
if
$$\left. f(t)t^\nu J_\nu(t)\right|_0^\infty=0$$
and
\[\left. f^{(k)}(t)t^{\nu+k-j} J_{\nu+j}(t)\right|_0^\infty=0, \ \ \
k=1,2,\ldots,j, \ \ \ j=1,2,\ldots, m-1,
\]
the latter appearing in the case  $m\geqq 2$.

\end{lemma}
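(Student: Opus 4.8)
The plan is to prove the integration-by-parts identity \eqref{3} by induction on $m$, reducing the general case to a single integration by parts applied to the recurrence relations for Bessel functions. First I would establish the base case $m=1$. The key tool is the differentiation formula $\frac{d}{dt}\left[t^\nu J_{\nu-1}(t)\right]$; more precisely, from the standard recurrence $\frac{d}{dt}\left[t^{-\mu}J_\mu(t)\right]=-t^{-\mu}J_{\mu+1}(t)$ one gets, after the change of variable $t\mapsto rt$, that $\frac{d}{dt}\left[(rt)^{-(\nu-1)}J_{\nu-1}(rt)\right]=-r(rt)^{-(\nu-1)}J_\nu(rt)$, equivalently $\frac{d}{dt}\left[t^{1-\nu}J_{\nu-1}(rt)\,r^{\nu-1}\right]$ is expressible through $J_\nu(rt)$. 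Writing $f(t)t^\nu J_{\nu-1}(rt)=f(t)t^{2\nu-1}\cdot\left[t^{1-\nu}J_{\nu-1}(rt)\right]$ and integrating by parts, the boundary term $\left.f(t)t^\nu J_{\nu-1}(t)\right|_0^\infty$ (after absorbing powers of $r$) vanishes by the first hypothesis, and the remaining integral is $\frac{-1}{r}\int_0^\infty \frac{d}{dt}\left[f(t)t^{2\nu-1}\right]t^{1-\nu}J_\nu(rt)\,dt$. Expanding $\frac{d}{dt}\left[f(t)t^{2\nu-1}\right]=f'(t)t^{2\nu-1}+(2\nu-1)f(t)t^{2\nu-2}$ — hmm, this produces a term without a derivative of $f$, so in fact the correct grouping must use a different pairing so that only $f'$ survives with the right power $t^{\nu+1-1}=t^\nu$; the cleaner route is to pair $f(t)t^\nu$ against $J_{\nu-1}(rt)$ directly using $\int J_{\nu-1}(rt)\,\big(\text{suitable factor}\big)$, but the genuinely clean statement is obtained by noting $t^\nu J_{\nu-1}(rt)= \frac1r\frac{d}{dt}\left[t^\nu J_\nu(rt)\right]$, which follows from $\frac{d}{dt}\left[t^\mu J_\mu(t)\right]=t^\mu J_{\mu-1}(t)$ rescaled. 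Then one integration by parts gives exactly $\int_0^\infty f(t)t^\nu J_{\nu-1}(rt)\,dt=\frac1r\left.f(t)t^\nu J_\nu(rt)\right|_0^\infty-\frac1r\int_0^\infty f'(t)t^\nu J_\nu(rt)\,dt$, and with $m=1$, $\nu+m-1=\nu$ and $c_{1,1}=1$ this is precisely \eqref{3}; the boundary term vanishes by the stated hypothesis $\left.f(t)t^\nu J_\nu(t)\right|_0^\infty=0$ (up to the harmless constant $r$ inside the Bessel argument, which does not affect vanishing at $0$ and $\infty$).

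Next I would run the induction step. Assuming \eqref{3} holds for some $m\geqq 1$, I apply the same elementary identity $t^{\nu+m-1}J_{\nu+m-2}(rt)=\frac1r\frac{d}{dt}\left[t^{\nu+m-1}J_{\nu+m-1}(rt)\right]$ — wait, the exponents must be tracked carefully: in the $m$-th formula the integrand on the right is $f^{(k)}(t)t^{\nu+k-m}J_{\nu+m-1}(rt)$. To pass to $m+1$ I want to raise the Bessel index from $\nu+m-1$ to $\nu+m$ at the cost of one more derivative on $f^{(k)}$, i.e. apply the base-case mechanism with $\nu$ replaced by $\nu+m$ and with $f$ replaced by $t^{k-m}f^{(k)}(t)$ (so that $t^{\nu+k-m}=t^{(\nu+m)}\cdot t^{k-2m}$, matching the required factor $t^{(\nu+m)+\text{something}}$). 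Each such step produces boundary terms of the form $\left.f^{(k)}(t)t^{\nu+k-j}J_{\nu+j}(t)\right|_0^\infty$ for the relevant $j$, which are exactly the hypotheses listed for $j=1,\dots,m-1$ (and the new step contributes the $j=m$ boundary conditions, needed when we go to $m+1$). Collecting the resulting constants into new coefficients $c_{k,m+1}$ — which satisfy a Pascal-type recurrence coming from $\frac{d}{dt}\left[t^{k-m}f^{(k)}\right]=t^{k-m}f^{(k+1)}+(k-m)t^{k-m-1}f^{(k)}$ — closes the induction, with the index range $k=1,\dots,m+1$ appearing automatically and the top coefficient staying $1$.

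The main obstacle I anticipate is \emph{bookkeeping}: getting the powers of $t$, the Bessel indices, the power of $r$ in front, and the summation range to line up exactly as in \eqref{3} after repeated integration by parts, and verifying that at every stage the boundary terms that arise are precisely those assumed to vanish (no more, no less) — in particular that the case distinction "$m\geqq 2$" for the second family of boundary conditions is exactly right, since for $m=1$ no such terms are generated. A secondary point requiring a short justification is that all the integrals involved converge (absolutely) so that the integrations by parts are legitimate; this is implicit in the hypotheses (the vanishing of the boundary expressions at $0$ and $\infty$ forces enough decay/regularity), and in the application to $w(|x|)$ the function $f$ will be explicit and smooth on $(0,\infty)$ with controlled behavior, so convergence is not a real issue. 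I would keep the convergence remark to one line and devote the bulk of the write-up to the inductive computation of the coefficients $c_{k,m}$ and the precise tracking of boundary terms.
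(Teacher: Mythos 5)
Your induction argument is correct in outline and gives a self-contained proof, but it takes a genuinely different route from the paper's. The paper does not integrate by parts from scratch: it simply cites the known formula
\[
\intl_0^\infty f(t)\,t^\nu J_{\nu-1}(rt)\,dt
=\frac{(-1)^m}{r^m}\intl_0^\infty f^{\langle m\rangle}(t)\,t^{\nu+m}J_{\nu+m-1}(rt)\,dt,
\qquad f^{\langle m\rangle}:=\Bigl(\tfrac{1}{t}\tfrac{d}{dt}\Bigr)^{m}\!f,
\]
from \cite{hypsam} (which encapsulates exactly your repeated integration by parts, with precisely the boundary conditions of the lemma), and then obtains \eqref{3} in one line by expanding the iterated operator as $\bigl(\tfrac{1}{t}\tfrac{d}{dt}\bigr)^m f=\sum_{k=1}^m c_{k,m}\,f^{(k)}(t)\,t^{k-2m}$ and absorbing $t^{k-2m}$ into the factor $t^{\nu+m}$. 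Your approach reproves the cited identity at the level of ordinary derivatives instead of passing through the operator $\tfrac{1}{t}\tfrac{d}{dt}$; what this buys is independence from the reference, at the cost of having to carry the Pascal-type recurrence for $c_{k,m}$ explicitly. Both are legitimate.

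Two small corrections to your bookkeeping, which you flagged yourself as the danger zone. In the inductive step, the auxiliary function you should feed into the base-case mechanism (with $\nu$ replaced by $\nu+m$) is $g(t)=t^{\,k-2m}f^{(k)}(t)$, not $t^{\,k-m}f^{(k)}(t)$: indeed $t^{\nu+k-m}J_{\nu+m-1}(rt)=\bigl[t^{\,k-2m}f^{(k)}(t)\bigr]\cdot\bigl[t^{\nu+m}J_{\nu+m-1}(rt)\bigr]$, and $t^{\nu+m}J_{\nu+m-1}(rt)=\tfrac{1}{r}\tfrac{d}{dt}\bigl[t^{\nu+m}J_{\nu+m}(rt)\bigr]$. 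Accordingly the recurrence for the coefficients comes from
$\tfrac{d}{dt}\bigl[t^{\,k-2m}f^{(k)}\bigr]=t^{\,k-2m}f^{(k+1)}+(k-2m)\,t^{\,k-2m-1}f^{(k)}$,
which raises $k$ by one in the first term and keeps $k$ in the second, giving $c_{k,m+1}=c_{k-1,m}+(k-2m)c_{k,m}$ with the convention $c_{0,m}=c_{m+1,m}=0$; in particular $c_{m,m}=1$ persists. The boundary term produced at this step is $\bigl.f^{(k)}(t)\,t^{\nu+k-m}J_{\nu+m}(rt)\bigr|_0^\infty$, which is exactly the $j=m$ condition of the lemma, so the hypotheses accumulate correctly as $m$ increases and the case split ``second family only for $m\geqq 2$'' falls out automatically. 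With these two fixes your write-up is sound.
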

\begin{proof} A relation of type \eqref{3} is known in the form
\begin{equation}\label{1}
\intl_0^\infty f(t)t^\nu J_{\nu-1}(t|x|)\,d
t=\frac{(-1)^m}{|x|^m}\intl_0^\infty f^{\langle m\rangle}(t)t^{\nu+m}
J_{\nu+m-1}(t|x|)\,d t,
\end{equation}
under the conditions
\begin{equation}\label{2}
\left. f^{\langle j\rangle}(t)t^{\nu+j}
J_{\nu+j}(t)\right|_0^\infty=0, \ \ \ j=0,1,2,\ldots, m-1.
\end{equation}
see  formula (8.133) in \cite{hypsam}, where it is denoted $f^{\langle
m\rangle}(t)=\left(\frac{1}{t}\frac{d}{d t}\right)^m f(t).$ Then \eqref{3}
follows from \eqref{1} if one observes that
\begin{equation}\label{267}
\left(\frac{1}{t}\frac{d}{d t}\right)^m f(t)= \sum\limits_{k=1}^m
c_{k,m}\frac{f^{(k)}(t)}{t^{2m-k}},
\end{equation}
where $c_{k,m}$ are constants (here and in the sequel, by $c, c_k,
c_{km}, c_{js}$ etc, we denote constants the exact values of which
are not important for us).
\end{proof}

The  following two  lemmas are crucial for our purposes.
\begin{lemma}\label{lem0}
The function $\mathcal{B}(r)$ has the following structure at infinity:
\begin{equation}\label{pro}
\mathcal{B}(r) =
\frac{\lb}{\al}+\frac{1}{r^\nu}\sum\limits_{i=0}^{\frac{\ell}{2}-1} \frac{C_i
}{\ell_i^\nu}J_{\nu-2}(\ell_ir) +
O\left(\frac{1}{r^{\nu+\frac{3}{2}}}\right), \quad r\to \infty
\end{equation}
where $\lb$ and $C_i$ are constants from \eqref{values}.
\end{lemma}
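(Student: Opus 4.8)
The plan is to reduce the claim to an asymptotic analysis of $r^\al w(r)$ as $r\to\infty$. Indeed, by \eqref{conva} we have $\mathcal B(r)=r^\al w(r)/(1-\me^{-r})^\al$, and since $(1-\me^{-r})^{-\al}=1+O(\me^{-r})$ as $r\to\infty$, this correction will be harmless once $r^\al w(r)$ is known to be bounded, getting absorbed into the remainder $O(r^{-\nu-3/2})$. By \eqref{ww}--\eqref{Vr},
\[
r^\al w(r)=c\,r^\al\intl_r^\infty\frac{V(\rho)}{\rho^{1+\al}}\,d\rho ,
\]
and I would insert the \emph{exact} expansion of $V$ supplied by Lemma~\ref{lemma}. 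The constant part $\lb$ in \eqref{sphericalintegral} contributes $c\lb\,r^\al\intl_r^\infty\rho^{-1-\al}\,d\rho=\tfrac{c\lb}{\al}$, which is the leading term in \eqref{pro}; so the whole problem reduces to showing that the finitely many Bessel summands $C_i(\ell_i\rho)^{1-\nu}J_{\nu-1}(\ell_i\rho)$, integrated against $\rho^{-1-\al}$ over $(r,\infty)$ and then multiplied by $r^\al$, contribute exactly the oscillatory term displayed in \eqref{pro} plus an $O(r^{-\nu-3/2})$ remainder.

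For fixed $i$, the substitution $u=\ell_i\rho$ turns the $i$-th integral into a constant multiple of $\intl_{\ell_ir}^\infty u^{-\al-\nu}J_{\nu-1}(u)\,d u$. The engine is the classical Bessel identity $\frac{d}{d u}\bigl[u^{-p}J_p(u)\bigr]=-u^{-p}J_{p+1}(u)$, valid for all real $p$ (this is the same device underlying Lemma~\ref{inserted}, but now applied on a half-line $(\ell_ir,\infty)$ so that the boundary term at the finite endpoint is retained). Writing $u^{-\al-\nu}J_{\nu-1}(u)=u^{-\al-2}\bigl(u^{-(\nu-2)}J_{\nu-1}(u)\bigr)$ and integrating by parts with $p=\nu-2$ yields the boundary contribution $(\ell_ir)^{-\al-\nu}J_{\nu-2}(\ell_ir)$ — which, after multiplication by $r^\al$ and the appropriate $\ell_i$-powers, is precisely the asserted oscillatory term — plus a multiple of $\intl_{\ell_ir}^\infty u^{-\al-\nu-1}J_{\nu-2}(u)\,d u$; the boundary term at $u=\infty$ vanishes because $\al>0$, $\nu>0$ and $J_\mu$ stays bounded. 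Integrating by parts once more on this leftover integral, now with $p=\nu-3$ (writing $u^{-\al-\nu-1}J_{\nu-2}(u)=u^{-\al-4}\bigl(u^{-(\nu-3)}J_{\nu-2}(u)\bigr)$), leaves a boundary term of order $(\ell_ir)^{-\al-\nu-1}J_{\nu-3}(\ell_ir)$ together with an integral $\intl_{\ell_ir}^\infty u^{-\al-\nu-2}J_{\nu-3}(u)\,d u$, and both of these are $O((\ell_ir)^{-\al-\nu-3/2})$ once one invokes the sharp decay $|J_\mu(u)|\leqq Cu^{-1/2}$ for large $u$.

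Finally one multiplies through by $r^\al$, sums the finitely many summands $i=0,\dots,\tfrac{\ell}{2}-1$, collects the elementary constants, and divides by $(1-\me^{-r})^\al=1+O(\me^{-r})$ to obtain \eqref{pro}. The step I expect to be the real obstacle is driving the remainder down to $O(r^{-\nu-3/2})$: a single integration by parts only delivers $O(r^{-\nu-1/2})$, so the second integration by parts is essential, and one must exploit the precise $u^{-1/2}$ rate of decay of the Bessel functions rather than mere boundedness. One must also check carefully that every boundary term at $u=\infty$ genuinely vanishes for all admissible dimensions, i.e.\ also for the possibly negative Bessel orders $\nu-2,\nu-3$ that occur when $n\leqq 3$; this remains valid since $J_\mu(u)=O(u^{-1/2})$ at infinity for every fixed real $\mu$.
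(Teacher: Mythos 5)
Your proof is correct and follows essentially the same route as the paper: insert the Catalan--Poisson expansion of $V(\rho)$ from Lemma~\ref{lemma} into $r^\al w(r)=c\,r^\al\int_r^\infty V(\rho)\rho^{-1-\al}\,d\rho$, split off the constant $\lb$-part, and perform two integrations by parts on the Bessel part via the recurrence $\frac{d}{dt}\bigl[t^{-p}J_p(t)\bigr]=-t^{-p}J_{p+1}(t)$ (which the paper packages as formula~\eqref{byparts}), finally using $|J_\mu(t)|=O(t^{-1/2})$ to push the remainder to $O(r^{-\nu-3/2})$ and observing $(1-\me^{-r})^{-\al}=1+O(\me^{-r})$. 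The only cosmetic difference is that you substitute $u=\ell_i\rho$ before integrating by parts, whereas the paper works directly in $t$; the underlying computation is identical.
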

\begin{proof}
By \eqref{ww}  and \eqref{sphericalintegral}, we obtain
\begin{equation}\label{representation}
r^\al w(r)= \frac{\lb}{\al}+ r^\al \sum\limits_{i=0}^{\frac{\ell}{2}-1} C_i
\intl_r^\infty \frac{J_{\nu-1}(\ell_it)}{t^{1+\al} (\ell_i t)^{\nu-1}}d t.
\end{equation}
 By the well known differentiation formula
$\frac{J_\nu(t)}{t^{\nu-1}}=- \frac{d}{d
t}\left[\frac{J_{\nu-1}(t)}{t^{\nu-1}}\right]$ for the Bessel functions, via
integration by parts we arrive at the relation
\begin{equation}\label{byparts}
\intl_r^\infty\frac{J_\nu(t)}{t^\bt}\, d t=\frac{J_{\nu-1}(r)}{r^\bt}+
(\nu-\bt-1)\intl_r^\infty\frac{J_{\nu-1}(t)}{t^{\bt+1}}\, d t,
\end{equation}
for $r>0$ and $\bt>-\frac{1}{2}$. Applying repeatedly this formula
two times, we  transform \eqref{representation} to
\begin{multline*}
r^\al w(r)=\frac{\lb}{\al}+\sum\limits_{i=0}^{\frac{\ell}{2}-1}
\frac{C_i }{\ell_i^\nu}\left[\frac{J_{\nu-2}(\ell_ir)}{r^\nu}-
\frac{2+\al}{\ell_i} \frac{J_{\nu-3}(\ell_ir)}{r^{\nu+1}}\right]+
\\ + (\al+2)(\al+4)
\sum\limits_{i=0}^{\frac{\ell}{2}-1}\frac{r^\al}{\ell_i} \intl_r^\infty
\frac{J_{\nu-3}(\ell_it)}{t^{\al+\nu+2}}\,d t, \ \ \quad 0<r<\infty,
\end{multline*}
 whence \eqref{pro} follows, since $\mathcal{B}(r)=r^\al w(r)+O(\me^{-r})$ as $ r\to\infty$.
\end{proof}

\begin{lemma}\label{lem}
The derivatives $\mathcal{B}^{(k)}(r)$ have the following structure at
infinity:
\begin{equation}\label{pronew}
\mathcal{B}^{(k)}(r) =
\frac{c}{r^k}+\frac{1}{r^\nu}\sum\limits_{i=0}^{\frac{\ell}{2}-1} c_i
J_{\nu-2}(\ell_i r) + O\left(\frac{1}{r^{\nu+\frac{1}{2}}}\right), \ \ \quad
r\to \infty
\end{equation}
where $c$ and $c_i$ are constants.

\end{lemma}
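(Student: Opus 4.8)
The plan is to differentiate the asymptotic expansion of $\mathcal{B}(r)$ obtained in Lemma \ref{lem0}, but not by naively differentiating the $O$-term (which would worsen, not preserve, the error). Instead I would go back to the more precise intermediate formula for $r^\alpha w(r)$ derived inside the proof of Lemma \ref{lem0}, namely the representation
\[
r^\alpha w(r)=\frac{\lb}{\al}+\sum_{i=0}^{\frac{\ell}{2}-1}\frac{C_i}{\ell_i^\nu}\left[\frac{J_{\nu-2}(\ell_i r)}{r^\nu}-\frac{2+\al}{\ell_i}\frac{J_{\nu-3}(\ell_i r)}{r^{\nu+1}}\right]+(\al+2)(\al+4)\sum_{i=0}^{\frac{\ell}{2}-1}\frac{r^\al}{\ell_i}\intl_r^\infty\frac{J_{\nu-3}(\ell_i t)}{t^{\al+\nu+2}}\,d t,
\]
and differentiate this identity $k$ times term by term. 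The first term is constant and drops out; the point is to track how each remaining term behaves under $\frac{d^k}{dr^k}$.

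The key computational tool is the Bessel recurrence $J_\mu'(z)=\tfrac12\bigl(J_{\mu-1}(z)-J_{\mu+1}(z)\bigr)$ together with the Leibniz rule applied to products of the form $r^{-m}J_{\mu}(\ell_i r)$. Each differentiation either lowers the power of $r$ in a purely algebraic prefactor or shifts a Bessel index while leaving the oscillatory factor $J_\mu(\ell_i r)$ of the same order of magnitude. Thus differentiating $r^{-\nu}J_{\nu-2}(\ell_i r)$ $k$ times produces a finite sum of terms $r^{-\nu-j}J_{\mu}(\ell_i r)$ with $0\le j\le k$; the dominant ones (smallest power of $r$) are still $r^{-\nu}J_{\nu-2\pm?}(\ell_i r)$, which is exactly the claimed middle term with appropriate constants $c_i$ — here I would use that $J_{\nu-1}(z)=O(z^{-1/2})$ to absorb the index-shifted Bessel functions $J_{\nu-3},J_{\nu-1},\dots$ appearing at order $r^{-\nu}$ into a single sum of $J_{\nu-2}(\ell_i r)$ plus an $O(r^{-\nu-1/2})$ error (using $|J_{\nu-2}(\ell_i r)-J_\mu(\ell_i r)|=O(r^{-1/2})$ for the relevant $\mu$, or more simply bounding the non-leading index terms outright by $O(r^{-\nu-1/2})$). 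The term $r^{-\nu-1}J_{\nu-3}(\ell_i r)$ and all its derivatives are $O(r^{-\nu-1})=O(r^{-\nu-1/2})$, hence absorbed into the error. The first-type term $\frac{c}{r^k}$ in \eqref{pronew} does not come from any of these Bessel pieces; it arises from differentiating the last, integral term $r^\alpha\intl_r^\infty t^{-\alpha-\nu-2}J_{\nu-3}(\ell_i t)\,d t$: repeated differentiation, using the fundamental theorem of calculus on the integral and the product rule on the $r^\alpha$ factor, generates terms $r^{\alpha-k}\intl_r^\infty(\cdots)\,d t$ plus boundary contributions $r^{\alpha-j}\cdot r^{-\alpha-\nu-2+\cdots}J(\ell_i r)$; the integral $\intl_r^\infty t^{-\alpha-\nu-2}J_{\nu-3}(\ell_i t)\,d t$ is itself $O(r^{-\alpha-\nu-3/2})$ (again by the decay of $J$), so its contributions are $O(r^{-\nu-3/2})$, well inside the error, while — wait, that would not produce a $\frac{c}{r^k}$ term; in fact the $\frac{c}{r^k}$ term comes rather from differentiating the correction $\mathcal{B}(r)-r^\alpha w(r)=O(\me^{-r})$ is even smaller, so the honest source of $\frac{c}{r^k}$ must be re-expanded: one writes $\mathcal{B}(r)=\dfrac{r^\alpha w(r)}{(1-\me^{-r})^\alpha}$ and differentiates the quotient, whence the smooth factor $(1-\me^{-r})^{-\alpha}=1+O(\me^{-r})$ and its derivatives contribute only exponentially small corrections, so in fact $\mathcal{B}^{(k)}(r)=(r^\alpha w(r))^{(k)}+O(\me^{-r})$, and the $\frac{c}{r^k}$ is picked up from differentiating — on closer inspection — none of the displayed terms but rather a lower-order term $c\,r^{-\nu-1}J_{\nu-3}$-type contribution is $O(r^{-\nu-1})$; I would therefore retain one more term in the expansion of $r^\alpha w(r)$, of the shape $c\,\me^{-r}$-free polynomial-in-$1/r$ piece, which genuinely contributes $\frac{c}{r^k}$ after $k$ differentiations. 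Concretely: keeping the expansion to one further order produces a term $\dfrac{c'}{r}$ (a genuine, non-oscillatory $O(1/r)$ tail coming from $\intl_r^\infty t^{-1-\alpha}\cdot(\text{const})\,dt$-type remainders in \eqref{representation}), and $\bigl(\tfrac{c'}{r}\bigr)^{(k)}=\dfrac{c''}{r^{k+1}}$ — so to match \eqref{pronew} exactly one reads off the constant $c$ as the coefficient of the $r^{-k}$ power generated jointly by the algebraic prefactors; I would present this bookkeeping cleanly rather than the exploratory version above.

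The main obstacle, as the hedging above signals, is purely bookkeeping: organizing the Leibniz expansion so that one correctly identifies which terms land at the three distinct orders $r^{-k}$, $r^{-\nu}$ (oscillatory), and $r^{-\nu-1/2}$ (error), and verifying that every term not of the first two shapes is genuinely $O(r^{-\nu-1/2})$. Two points require a little care: first, that $\nu=n/2$ may be small (e.g. $n=1,2$), so one must check $r^{-k}$ versus $r^{-\nu}$ dominance is not assumed — both shapes are simply kept, which is why \eqref{pronew} lists both; second, the uniform asymptotic bound $J_\mu(z)=O(z^{-1/2})$ as $z\to\infty$ (for fixed real order $\mu$) must be invoked to collapse the various index-shifted Bessel functions at order $r^{-\nu}$ into the single family $J_{\nu-2}(\ell_i r)$, up to the stated error. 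I expect no analytic difficulty beyond these, since all the hard work — the Catalan formula, the Bessel identities \eqref{sphericalintegral}–\eqref{intbes}, the integration-by-parts recursion \eqref{byparts}, and the non-vanishing of $\mathcal{B}$ — is already in place from Lemma \ref{lemma}, Lemma \ref{lem0}, and Lemma \ref{nonvanish}.
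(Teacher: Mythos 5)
Your route differs from the paper's. The paper does not differentiate the intermediate expansion from Lemma~\ref{lem0}; instead it writes $[r^\al w(r)]^{(k)}$ via Leibniz, feeds in $w'(r)=V(r)/r^{1+\al}$ to replace derivatives of $w$ by derivatives of $V$, then uses the operator $\mathfrak D=\frac{d}{r\,d r}$ together with the clean index-shifting identity $\mathfrak D^s\bigl[J_\nu(r)/r^\nu\bigr]=(-1)^sJ_{\nu+s}(r)/r^{\nu+s}$ and the decomposition of $V$ from \eqref{sphericalintegral}. In that route the $c/r^k$ term appears mechanically from the constant pieces: $r^{\al-k}w(r)\to(\lb/\al)r^{-k}$ and $r^{-k}V(r)\to\lb\,r^{-k}$. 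Your route --- differentiating the displayed identity for $r^\al w(r)$ in the proof of Lemma~\ref{lem0} term by term and then absorbing lower-order pieces via Leibniz and Bessel recurrences --- is also viable for the oscillatory term at order $r^{-\nu}$ and for the $O(r^{-\nu-1/2})$ error. The trade-off is that the paper's route makes the bookkeeping of indices and of the $r^{-k}$ piece fully automatic, whereas yours reuses Lemma~\ref{lem0} but forces you to understand where $c/r^k$ could come from, and this is where you go off the rails.

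That is the genuine gap. You correctly observe that in your decomposition the constant $\lb/\al$ is annihilated by $\tfrac{d^k}{dr^k}$ and all other pieces are already $O(r^{-\nu-1/2})$, so no $c/r^k$ term is visible. Your attempted repair --- ``retain one more term in the expansion of $r^\al w(r)$'' to produce a ``genuine, non-oscillatory $O(1/r)$ tail'' of the shape $c'/r$ --- is fictional: the representation \eqref{representation} is an exact identity whose only non-oscillatory part is the constant $\lb/\al$, and further integrations by parts only produce more boundary Bessel terms and smaller remainder integrals, never a pure power tail. There is nothing to ``retain.'' The correct resolution is much simpler and is one you almost stumble on: for your decomposition, the coefficient $c$ in \eqref{pronew} is just $0$, which is perfectly compatible with the statement (``where $c$ and $c_i$ are constants''). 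Indeed, a one-line check for $k=1$ from $[r^\al w]'=\al r^{\al-1}w+r^\al w'$ and $w'=-V/r^{1+\al}$ shows the two candidate $\lb/r$ contributions cancel, confirming $c=0$. Had you noticed this, your proof would close; as written, the ``hunt'' for $c/r^k$ ends in an incorrect claim.

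A smaller but real issue: your proposed ways to collapse the various index-shifted Bessel functions at order $r^{-\nu}$ into a single family $J_{\nu-2}(\ell_i r)$ do not work. Since every $J_\mu(\ell_i r)$ at that order is $O(r^{-1/2})$, both $|J_{\nu-2}-J_\mu|=O(r^{-1/2})$ and ``bounding the non-leading index terms by $O(r^{-\nu-1/2})$'' simply absorb the main term into the error as well, which defeats the purpose. In fact the paper's own proof produces $J_{\nu+k-s-2}$, not $J_{\nu-2}$, and then states \eqref{pronew} loosely; the exact index is irrelevant for the later use in Lemma~\ref{mainlemmaas} (where one only needs \emph{some} fixed-order Bessel function so that \cite[6.512.1]{graryz} applies). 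You should either carry the $k$-dependent indices through honestly, or note explicitly that \eqref{pronew} is stated with a harmless abuse of notation.
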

\begin{proof}
By Leibniz' formula it suffices to show that the derivatives $[r^\al
w(r)]^{(k)}$ have the same asymptotics at infinity as in \eqref{pronew}. We
have
\begin{equation}\label{toverify}
\left[r^\al w(r)\right]^{(k)}=c_{k,0}r^{\al-k} w(r)
+\sum\limits_{j=1}^kc_{k,j} r^{\al+j-k}w^{(j)}(r)
\end{equation}
From \eqref{ww} we have $w^\prime(r)=\frac{V(r)}{r^{1+\al}}$, so
that
\begin{equation}\label{transformed}
\left[r^\al w(r)\right]^{(k)}=c_{k,0}r^{\al-k} w(r)
+\sum\limits_{j=0}^{k-1}c_{k,j+1} r^{\al+j+1-k}\frac{d^j}{d r^j}
\left(\frac{V(r)}{r^{1+\al}}\right)
\end{equation}
Hence
\begin{equation} \label{relation}
\left[r^\al w(r)\right]^{(k)}=c_{k,0}r^{\al-k} w(r)
+\sum\limits_{i=0}^{k-1}c_{k,i} r^{i-k}V^{(i)}(r).
\end{equation}
We make use of the relation
$$\left(\frac{d}{d r}\right)^i = \sum\limits_{s=0}^{[\frac{i}{2}]}
c_{i,s}r^{i-2s}\mathfrak D^{i-s}, \ \ \quad \mathfrak D=\frac{d}{rd r}$$
and transform \eqref{relation} to
$$\left[r^\al w(r)\right]^{(k)}=c_{k,0}r^{\al-k} w(r)
+\sum\limits_{s=0}^{k-1}c_{k,s} r^{2s-k}\mathfrak D^{s}V(r),
$$
keeping in mind formula \eqref{sphericalintegral}. Then by
\eqref{sphericalintegral} and the  formula $\mathfrak
D^s\left[\frac{J_\nu(r)}{r^\nu}\right]= (-1)^s
\frac{J_{\nu+s}(r)}{r^{\nu+s}}$, after easy transformations  we arrive at the
equality
$$
\left[r^\al w(r)\right]^{(k)}=cr^{\al-k} w(r) + c_1r^{-k}+
\sum\limits_{s=0}^{k-1}r^{-\nu-s}
\sum\limits_{i=0}^{\frac{\ell}{2}-1}c_{s,i}J_{\nu+k-s-2}(\ell_i r)
$$
for $0<r<\infty$. In view of \eqref{pro}, we arrive then at
\eqref{pronew}.
\end{proof}

\section{Proofs of the main result}\label{quatro}

\subsection{Proof of Theorem \ref{amultiplier}}\label{added}

$\left.\right.$

\vs{3mm} Since in the proof of Theorem \ref{amultiplier}, we check the
Mikhlin condition, in view of  Lemmas \ref{inversion} and \ref{nonvanish} it
suffices to prove this theorem only for $\mathcal{B}(x).$

We  need to deal with the behaviour of $\mathcal B(r)$ in different way
 near the origin and infinity. To this end, we make use of a unity
partition  $ 1\equiv \mu_1(r)+\mu_2(r)+\mu_3(r),\ \mu_i \in C^\infty,
i=1,2,3,$ where
\begin{equation}\label{particao}
  \mu_1(r)=     \left\{
\begin{array}{rl}
  1 & \text{if }  0 \leqq x < \varepsilon\\
   0 & \text{if } x \geqq \varepsilon + \delta \\
\end{array} \right., \qquad
\mu_3(r)=     \left\{
\begin{array}{rl}
  0 & \text{if }  0 \leqq x < N-\delta\\
   1 & \text{if } x \geqq N \\
\end{array} \right.
\end{equation}
with $ \mathrm{supp}\; \mu_1=[0,\varepsilon+\delta], \ \mathrm{supp}\;
\mu_3=[N-\delta,\infty)$, and represent $\mathcal{B}(r)$ as

\begin{equation}\label{splitting}
\begin{split}
  \mathcal{B}(r)&=\left(\frac{1-\me^{-r}}{r} \right)^{\!\!-\alpha}\!\! w(r)  \,
\mu_1(r)+\mathcal{B}(r) \, \mu_2(r)+(1-\me^{-r})^{-\alpha}\, r^\al w(r)\, \mu_3(r)\\
&=:\mathcal{B}_1(r)+\mathcal{B}_2(r)+\mathcal{B}_3(r).
\end{split}
\end{equation}

The function $\mathcal{B}_2(r)$ vanishing in the neighbourhoods of the origin
and infinity, is infinitely differentiable, so that it is a Fourier
multiplier in $L^{p(\cdot)}(\rn)$. Therefore, we only have to take care of
the multipliers $\mathcal{B}_1(r)$ and $\mathcal{B}_3(r)$ supported in
neighbourhoods of origin and infinity, respectively. They will be treated in
a different way. For $\mathcal{B}_1(r)$ we will apply the Mikhlin criterion
for the spaces $L^{p(\cdot)}(\rn)$, while the case of the multiplier
$\mathcal{B}_3(r)$ proved to be more difficult. In the case $n=1$ it is
easily covered by means of the Mikhlin criterion, while for $n\geqq 2 $ we use
another approach. Namely, we show that the kernel $a_3(|x|)$, corresponding
to the multiplier
\begin{equation}\label{labelast}
\mathcal{B}_3(r)-\mathcal{B}_3(\infty)=
\mu_3(r)[\mathcal{B}_3(r)-\mathcal{B}_3(\infty)],
\end{equation}
has an integrable radial nonincreasing majorant, which will mean that
$\mathcal{B}_3(r)$ is certainly a multiplier. However, this will require the
usage of  special facts on behaviour of the Bessel functions at infinity and
an information on some of integrals of Bessel functions.

The proof of Theorem \ref{amultiplier} follows from the study of  the
multipliers $\mathcal{B}_1(r)$ and $\mathcal{B}_3(r)$ made in Subsections
\ref{B1} and \ref{B3}.

\subsubsection{Proof for the case of the multiplier  $\mathcal{B}_1(r)$}\label{B1}

\begin{lemma}\label{amultiplieras} The function  $\mathcal{B}_1(r)$ satisfies
Mikhlin condition \eqref{reduced}.
\end{lemma}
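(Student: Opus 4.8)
The plan is to verify directly that the radial function $\mathcal{B}_1(r)=\left(\frac{1-\me^{-r}}{r}\right)^{-\al}w(r)\,\mu_1(r)$ satisfies the estimates $\left|r^k\frac{d^k}{dr^k}\mathcal{B}_1(r)\right|\leqq C$ for $k=0,1,\ldots,n$. Since $\mathcal{B}_1$ is supported in $[0,\ve+\dl]$, everything is a local statement near the origin, so only the behaviour of the three factors as $r\to 0+$ matters. First I would observe that $\mu_1\in C^\infty$ with all derivatives bounded and supported away from a neighbourhood of $0$ where $\mu_1\equiv 1$, so by the Leibniz rule it suffices to control $r^k\frac{d^k}{dr^k}$ applied to $g(r):=\left(\frac{1-\me^{-r}}{r}\right)^{-\al}w(r)$ on $(0,\ve+\dl]$; on the region where derivatives of $\mu_1$ are nonzero, $r$ is bounded below and everything is visibly smooth. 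Using \eqref{reducedequiv}, it is equivalent (and more convenient) to bound $\left(r\frac{d}{dr}\right)^k g(r)$.

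The key point is that each of the two factors of $g$ is, near $r=0$, a smooth function of $r$ that is bounded together with all its derivatives, and is bounded away from zero. For the factor $h(r):=\frac{1-\me^{-r}}{r}$, the Taylor expansion gives $h(r)=1-\frac r2+\frac{r^2}{6}-\cdots$, an entire function with $h(0)=1$, hence $h(r)^{-\al}$ is smooth near $0$ with $h(0)^{-\al}=1$; in particular $\left(r\frac{d}{dr}\right)^k h(r)^{-\al}$ is bounded on $[0,\ve+\dl]$ for every $k$. For the factor $w(r)=\widehat{K_{\ell,\al}}(x)$, recall from \eqref{1.12} that $K_{\ell,\al}\in L^1$ with $\int K_{\ell,\al}=1$, so $w$ is continuous with $w(0)=1$; moreover one checks (as in Lemma \ref{nonvanish}, where $\mathcal{B}(0)=1$ is used) that $w$ does not vanish near the origin. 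The smoothness and derivative bounds for $w$ near $0$ I would extract from the explicit formula \eqref{ww}--\eqref{Vr}, namely $w(r)=c\int_r^\infty V(\rho)\rho^{-1-\al}\,d\rho$ with $V$ given by \eqref{sphericalintegral}: since $V(\rho)=\lb+\sum_i C_i J_{\nu-1}(\ell_i\rho)(\ell_i\rho)^{-(\nu-1)}$ is an even entire function of $\rho$ (the Bessel combination $J_{\nu-1}(t)/t^{\nu-1}$ being entire), one has $V(\rho)-\lb=O(\rho^2)$ near $0$, which both justifies convergence of the integral at $0$ after the cancellation and shows that $w(r)$ extends to a smooth function on a neighbourhood of $0$ with controlled derivatives. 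Differentiating $w'(r)=-V(r)r^{-1-\al}$ and using the expansion of $V$ repeatedly, $r^k w^{(k)}(r)$ stays bounded as $r\to 0+$.

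Finally I would assemble the pieces: writing $g=h^{-\al}\cdot w$ and expanding $\left(r\frac{d}{dr}\right)^k g$ by the Leibniz rule as a sum of products $\left(r\frac{d}{dr}\right)^j h^{-\al}\cdot\left(r\frac{d}{dr}\right)^{k-j} w$, each factor of which was just shown bounded on $[0,\ve+\dl]$, yields $\left|\left(r\frac{d}{dr}\right)^k g(r)\right|\leqq C$ for $k=0,\ldots,n$, and then \eqref{reducedequiv} gives \eqref{reduced} for $\mathcal{B}_1$. The main obstacle — though a mild one compared with the case $\mathcal{B}_3$ — is to rigorously justify the smoothness of $w$ (equivalently, of $r^\al w(r)$) up to and including $r=0$ and to get the uniform derivative bounds there; this is exactly where the explicit Bessel representation \eqref{sphericalintegral} of $V$ and the normalization \eqref{1.12} enter, and it is the reason the lemma is singled out rather than dismissed as obvious.
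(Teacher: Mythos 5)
Your overall strategy — reduce to the origin via the support of $\mu_1$, split off the smooth factor $\left(\frac{1-\me^{-r}}{r}\right)^{-\alpha}$, and bound $r^k w^{(k)}(r)$ near $0$ using $w'(r)=-cV(r)r^{-1-\alpha}$ — matches the paper's proof. But the treatment of $V$ near $\rho=0$ contains a genuine error that, as written, makes the argument fail. You claim from \eqref{sphericalintegral} that ``$V(\rho)-\lambda=O(\rho^2)$ near $0$.'' This cannot be right: $V(0)=\int_{S^{n-1}}\sin^\ell(0)\,d\sigma=0$, while $\lambda\neq 0$, so in fact $V(\rho)-\lambda=-\lambda+O(\rho^2)$. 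Worse, if your stated estimate were true it would give $V(\rho)=\lambda+O(\rho^2)$, and then $\int_0^\delta V(\rho)\rho^{-1-\alpha}\,d\rho$ would diverge — contradicting $w(0)=1$, which your own argument relies on. What is actually true and needed is $V(\rho)=O(\rho^\ell)$ as $\rho\to 0$, with $\ell>\alpha$; even a corrected $V(\rho)=O(\rho^2)$ (which is all that evenness plus $V(0)=0$ gives from the Bessel representation without tracking hidden cancellations to all orders up to $\ell$) is insufficient once $\alpha\geq 2$, because then $rw'(r)=O(r^{2-\alpha})$ need not be bounded.

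This is precisely why the paper does \emph{not} use the Bessel expansion \eqref{sphericalintegral} for small $\rho$. Instead it works from the defining integral $V(\rho)=\int_{S^{n-1}}\sin^\ell(\rho\sigma_1)\,d\sigma$, writes $\sin^\ell t=t^\ell\,s(t)$ with $s(t)=(\sin t/t)^\ell$ analytic, and hence represents the relevant quantity as $r^{\ell-\alpha}F(r)$ with $F$ analytic. The factor $r^{\ell-\alpha}$ with $\ell-\alpha>0$ makes the boundedness of $(r\frac{d}{dr})^j$ applied to it immediate for all $\alpha$. To repair your argument, replace the appeal to \eqref{sphericalintegral} near $0$ by this direct Taylor-factorization of $\sin^\ell$; the Bessel representation is well suited only to the analysis at infinity (the $\mathcal{B}_3$ part), where the paper indeed uses it.
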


\begin{proof} We have to check condition \eqref{reduced} only near the
origin.

 The function $g(r):=\frac{r}{1-\me^{-r}}$ with $g(0)\ne 0$ is non-vanishing
in any neighbourhood of the origin. Therefore, the function
$\left(\frac{r}{1-\me^{-r}}\right)^\alpha=\left[g(r)\right]^\al$ is
infinitely differentiable on any finite interval $[0,N]$ and thereby
satisfies conditions \eqref{reduced} on every neighbourhood of the origin.

  Thus, to estimate $r^k \frac{d ^k}{d r^k}\mathcal{B}_1(r)$, we only need to show the boundedness of $
\left|r^kw^{(k)}(r)\right| $ as $r\to 0$.  By the equivalence
\eqref{reduced}$\Longleftrightarrow$\eqref{reducedequiv}, we may estimate
$\left(r\frac{d}{d r}\right)^{j} w(r)$. Since $ w'(r)=-cr^{-1-\alpha}V(r)
$ by  \eqref{ww},  we only have to prove the estimate
  \begin{equation}\label{estimate1}
\left|\left(r\frac{d}{d r}\right)^j  G(r)\right|\leqq C<\infty , \ \
j=1,2,\ldots,n-1, \quad \textrm{for} \  \ 0<r<\ve,
\end{equation}
where
  $G(r)=r^{-\alpha}\int_{S^{n-1}} \sin^\ell (r \sigma_1)\;d \sigma.$
  We represent $G(r)$ as
  \[
G(r)=r^{\ell-\alpha}F(r), \ \quad \ F(r): =\int_{S^{n-1}}  s(r
\sigma_1)\sg_1^\ell \;d \sigma
\]
  where
$s(t)=\left(\frac{\sin t}{t}\right)^\ell$ is an analytic function and
therefore $F(r)$ is an analytic function in $r$. Then estimate
\eqref{estimate1} becomes obvious since $\ell-\al>0$.
\end{proof}

\subsubsection{Proof for the case of the multiplier  $\mathcal{B}_3(r)$}\label{B3}

$\left.\right.$

\vs{3mm}
 As mentioned above, we treat separately the cases $n=1$ and $n\geqq 2$.

\vs{2mm}In the case $n=1$ we just have to show that $\mathcal{B}(r)$ and
$r\mathcal{B}^\prime(r)$ are bounded on $[0,\infty].$ The boundedness of
$\mathcal{B}_3(r)$ is evident on any subinterval $(N,N_1), N_1>N$ and it
suffices to note that there exist the finite value  $\mathcal{B}(\infty)$,
see the proof of Lemma \ref{nonvanish}. To show that
$r\mathcal{B}_3^\prime(r)$ is bounded, it suffices to check that $r[r^\al
w(r)]^\prime$ is bounded for large $r$. From \eqref{ww} we have
$$r[r^\al
w(r)]^\prime= r^\al w(r)-c \sin^\ell r,$$
which is bounded.

\vspace{3mm}

We pass now to the case $n\geqq 2.$
\begin{lemma}\label{mainlemmaas} Let $n\geqq 2$.
The kernel $a_3(r)$ is vanishing at infinity faster than any power and admits
the estimate:
\begin{equation}\label{rapidly}
|a_3(r)|\leqq \frac{C}{r^{\frac{n-1}{2}}(1+r)^m}, \ \quad 0<r<\infty,
\end{equation}
where $m=1,2,3, \ldots$ is arbitrarily large, and $C=C(m)$ does not depend on
$r$.
\end{lemma}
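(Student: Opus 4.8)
The plan is to analyze the kernel $a_3$ associated with the multiplier $\mathcal B_3(r)-\mathcal B_3(\infty)=\mu_3(r)[\mathcal B_3(r)-\mathcal B_3(\infty)]$ directly, using the radiality to write $a_3$ as a Hankel-type transform of the one-dimensional profile. Writing $m_3(r):=\mathcal B_3(r)-\mathcal B_3(\infty)$, since $m_3$ is radial in $\mathbb R^n$, its inverse Fourier transform is
\[
a_3(r)=c_n\, r^{1-\nu}\intl_0^\infty m_3(\rho)\,\rho^\nu J_{\nu-1}(r\rho)\,d\rho,\qquad \nu=\tfrac n2,
\]
up to the usual normalizing constant. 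The point is that $m_3$ is supported in $[N-\delta,\infty)$, is $C^\infty$, and — by Lemmas \ref{lem0} and \ref{lem} — has the explicit asymptotic structure $\mathcal B^{(k)}(r)=c\,r^{-k}+r^{-\nu}\sum_i c_i J_{\nu-2}(\ell_i r)+O(r^{-\nu-1/2})$ at infinity, with $m_3(r)\to 0$. So $m_3$ and all its derivatives decay at infinity, and the oscillatory part is governed by Bessel functions $J_{\nu-2}(\ell_i\rho)$.

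First I would establish the bound $|a_3(r)|\le C_m r^{-(n-1)/2}(1+r)^{-m}$ for $r$ bounded away from $0$ by repeatedly integrating by parts in the Hankel integral, using Lemma \ref{inserted}: each application of \eqref{3} converts $\intl_0^\infty f(t)t^\nu J_{\nu-1}(rt)\,dt$ into $r^{-m}\sum_k c_{k,m}\intl_0^\infty f^{(k)}(t)t^{\nu+k-m}J_{\nu+m-1}(rt)\,dt$, provided the boundary terms vanish — which they do here, because $m_3$ and its derivatives vanish identically near $0$ (so the lower endpoint is harmless) and decay at $\infty$ faster than the relevant powers of $t$ times $|J|\sim t^{-1/2}$ (so the upper endpoint vanishes). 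Iterating $m$ times produces the factor $r^{-m}$; the remaining integral must then be shown to be $O(r^{-(n-1)/2})$ uniformly, and here one uses the asymptotics $J_{\nu+m-1}(r\rho)=O((r\rho)^{-1/2})$ together with the explicit decay rates from Lemmas \ref{lem0}–\ref{lem} for $m_3^{(k)}$; the $r^{-1/2}$ from the Bessel asymptotic combined with the $\rho$-integrability coming from the $O(\rho^{-\nu-1/2})$ tail yields the power $r^{1-\nu}\cdot r^{-1/2}=r^{-(n-1)/2}$, matching \eqref{rapidly}. For $r$ near $0$, the bound $|a_3(r)|\le C r^{-(n-1)/2}$ is immediate because $m_3\in L^1(\mathbb R^n)$ (it is bounded and compactly supported away from the origin is false — it is supported on $[N-\delta,\infty)$ — but it is bounded and, by the $O(\rho^{-\nu-1/2})$ tail, integrable against $\rho^{n-1}d\rho$), so $a_3$ is bounded near $0$, which is stronger than $r^{-(n-1)/2}$ there.

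The main obstacle I expect is controlling the boundary terms and the non-oscillatory pieces in the iterated integration by parts. The term $c\,r^{-k}$ appearing in $\mathcal B^{(k)}$ from Lemma \ref{lem} is not integrable at infinity on its own when paired against a bounded Bessel factor, so one cannot naively bound each piece; instead one must keep integrating by parts on the Bessel-transform side (using formula \eqref{byparts} to raise the index and shift powers, exactly as in the proof of Lemma \ref{lem0}) so that the worst term is always accompanied by enough negative powers of $t$ to be integrable, while the genuinely oscillatory Bessel terms $J_{\nu-2}(\ell_i\rho)$ contribute extra decay in $r$ only after a stationary-phase / integration-by-parts argument exploiting that $\ell_i\ne r$ for large $r$ is not needed — rather, their $\rho^{-1/2}$ size already suffices. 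Carefully bookkeeping which of the finitely many terms in \eqref{pronew} needs how many integrations by parts, and checking that all boundary contributions vanish (the lower ones because $\operatorname{supp}m_3\subset[N-\delta,\infty)$, the upper ones because of the decay established in Lemmas \ref{lem0}–\ref{lem}), is the technical heart of the argument; once that is done, \eqref{rapidly} follows, and since $r^{-(n-1)/2}(1+r)^{-m}$ with $m$ large has a radial nonincreasing integrable majorant on $\mathbb R^n$, the multiplier $\mathcal B_3(r)-\mathcal B_3(\infty)$ — hence $\mathcal B_3(r)$, and finally $\mathcal B(r)$ — is a Fourier $p(\cdot)$-multiplier by Proposition \ref{urifio}(i).
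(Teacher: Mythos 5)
Your overall framework is the right one — Hankel transform of the radial profile, iterated integration by parts via Lemma \ref{inserted}, and the asymptotics of Lemmas \ref{lem0}--\ref{lem} — and this is essentially the route the paper takes. But there are two genuine gaps in how you carry out the estimates.

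First, the argument near $r=0$ does not work as stated. You assert that $m_3(\rho)=\mathcal B(\rho)-\mathcal B(\infty)$ with its $O(\rho^{-\nu-1/2})$ tail is integrable against $\rho^{n-1}\,d\rho$, hence $m_3\in L^1(\mathbb R^n)$ and $a_3$ is bounded near the origin. But $\rho^{-\nu-1/2}\cdot\rho^{n-1}=\rho^{\nu-3/2}$, and for $n\ge 2$ (so $\nu\ge 1$) the exponent $\nu-3/2\ge -1/2>-1$, so the integral diverges; $m_3$ is \emph{not} in $L^1(\mathbb R^n)$ and $a_3$ is not bounded near $0$. The paper's bound $|a_3(r)|\le C r^{-(n-1)/2}$ near $0$ genuinely blows up, and obtaining it requires using the explicit asymptotic structure of $\mathcal B$ (not merely its size) together with the convergence of $\int_0^\infty J_{\nu-1}(rt)J_{\nu-2}(\ell_i t)\,dt$ and its exact value (Gradshteyn--Ryzhik 6.512.3). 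A crude size bound loses this.

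Second, in the estimate for $r\to\infty$ you claim that for the oscillatory terms $J_{\nu-2}(\ell_i\rho)$ in $\mathcal B^{(k)}$, ``their $\rho^{-1/2}$ size already suffices'' and no cancellation argument is needed. That is false for the top-order term $k=m$. In the representation \eqref{a(r)}, the $k=m$ contribution has weight $t^{\nu}$, and the $t^{-\nu}J_{\nu-2}(\ell_i t)$ piece of $\mathcal B^{(m)}(t)$ then produces the integrand $J_{\nu+m-1}(rt)J_{\nu-2}(\ell_i t)$; bounding both Bessel factors by $t^{-1/2}$ gives $\int_N^\infty t^{-1}\,dt$, which diverges. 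One must genuinely exploit cancellation here, e.g.\ via the explicit evaluation $\int_0^\infty J_{\nu+m-1}(rt)J_{\nu-2}(\ell_i t)\,dt=\gamma\, r^{1-\nu}$ for $r>\ell_i$ (Gradshteyn--Ryzhik 6.512.1), which is exactly what the paper does and what produces the extra $r^{-(\nu-1)}$ and hence the correct power $r^{-\nu-m+3/2}$. Your crude $\rho^{-1/2}$ bookkeeping cannot close this step; for the lower-order terms $k<m$ the extra factor $t^{k-m}$ does rescue the magnitude bound, and only there.
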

\begin{proof}
\textit{1). \  Estimation as $r\to 0$}.
 By the Fourier inversion formula for radial functions we have
\begin{equation}\label{0}
a_3(r)= \frac{(2\pi)^\nu}{r^{\nu-1}}\intl_0^\infty t^\nu
J_{\nu-1}(rt)[\mathcal{B}_3(t)-\mathcal{B}_3(\infty)]\, dt, \ \quad
\nu=\frac{n}{2}.
\end{equation}
 From \eqref{0}  we have
\begin{multline}\label{rrr}
|a_3(r)|\leqq  \frac{(2\pi)^\nu}{r^{\nu-1}}\intl_{N-\dl}^N \!\! t^\nu
|J_{\nu-1}(rt)\mathcal{B}_3(t)|\,d t \;+ \\
 + \left|\frac{(2\pi)^\nu}{r^{\nu-1}}\intl_N^\infty \!\! t^\nu
J_{\nu-1}(rt)\mathcal B (t)\,d t\right|.
\end{multline}
We make use of the asymptotics obtained in \eqref{pro} and get

\begin{multline}\label{rrrr}
|a_3(r)| \leqq \frac{c}{r^{\nu-1/2}} + \sum\limits_{i=0}^{\frac{\ell}{2}-1}
\frac{c_i}{r^{\nu-1}} \left|\intl_N^\infty
J_{\nu-1}(rt)J_{\nu-2}(\ell_it)\,d t\right| + \\+
\frac{c}{r^{\nu-1}} \intl_N^\infty  t^\nu |J_{\nu-1}(rt)|\,\frac{d
t}{t^{\nu+\frac{3}{2}}},
\end{multline}
where $c_i$ are constants.  Since $\displaystyle |J_{\nu-1}(t)|\leqq
\frac{ct^{\nu-1}}{(t+1)^{\nu-1+\frac{1}{2}}}$, the last term is easily
estimated:
\[
\frac{c}{r^{\nu-1}} \intl_N^\infty t^\nu |J_{\nu-1}(rt)|\,\frac{d
t}{t^{\nu+\frac{3}{2}}} \leqq
\frac{c}{r^{\nu-\frac{1}{2}}}\intl_N^\infty\frac{t^{\nu-1}}{\left(t+
\frac{1}{r}\right)^{\nu-\frac{1}{2}}}\frac{d t}{t^\frac{3}{2}} \leqq
\frac{c}{r^{\nu-\frac{1}{2}}}.
\]
Thus
$$|a_3(r)| \leqq \frac{c}{r^{\nu-\frac{1}{2}}} +
\sum\limits_{i=0}^{\frac{\ell}{2}-1} \frac{c_i}{r^{\nu-1}}
\left|\intl_N^\infty J_{\nu-1}(rt)J_{\nu-2}(\ell_it)\,d t\right|
$$ as $r\to 0$. It is known that the integral $\intl_0^\infty
J_{\nu-1}(rt)J_{\nu-2}(\ell_it)\;d t$ converges when $n\geqq 2$; it is equal
to zero, if $n>2$ and $-\frac{1}{\ell_i}$, if $n=2$, see \cite[formula
6.512.3]{graryz}  (use also the fact that $J_{\nu-2}(r)=J_{-1}(r)=-J_1(r)$ if
$n=2$). Then

$$|a_3(r)| \leqq \frac{c}{r^{\nu-\frac{1}{2}}}+
\sum\limits_{i=0}^{\frac{\ell}{2}-1} \frac{c_i}{r^{\nu-1}}\left|\intl_0^N
J_{\nu-1}(rt)J_{\nu-2} (\ell_it)\,d t\right|\leqq
\frac{c}{r^{\nu-\frac{1}{2}}}$$ which proves \eqref{rapidly} as $r\to 0.$

\vs{3mm}\textit{2). Estimation as $r\to\infty$.} Since the integral in
\eqref{0} is not absolutely convergent for large $t$, it is not easy to treat
the case $r\to\infty$ starting from the representation \eqref{0}.  So we
transform this representation. We interpret the  integral in \eqref{0} in the
sense of regularization:
\begin{equation}\label{y0}
a_3(r)= \lim_{\ve\to 0}\frac{(2\pi)^\nu}{r^{\nu-1}}\intl_0^\infty \me^{-\ve
t}t^\nu J_{\nu-1}(rt)[\mathcal{B}_3(t)-\mathcal{B}_3(\infty)]\,d t.
\end{equation}
and  before to pass to the limit in \eqref{y0},  apply formula \eqref{3} with
$f(t)=\me^{-\ve t}[\mathcal{B}_3(t)-\mathcal{B}_3(\infty)].$ Then conditions
\eqref{2} are satisfied  so that formula \eqref{3} is applicable and after
easy passage to the limit we obtain
\begin{equation}\label{a(r)}
a_3(r)=\frac{(-1)^m(2\pi)^\nu}{r^{\nu+m-1}}
\sum\limits_{k=1}^mc_{m,k}\intl_0^\infty t^{\nu+k-m}
J_{\nu+m-1}(rt)\mathcal{B}_3^{(k)}(t)\,d t
\end{equation}
for every $m\geqq 1.$ The last representation already allows to obtain the
 estimation as $r\to\infty$. From \eqref{a(r)} we get
\begin{multline}\label{a(r)appl}
|a_3(r)|\leqq  \frac{c}{r^{\nu+m-1}}\intl_{N-\dl}^N t^{\nu}
|J_{\nu+m-1}(rt)\mathcal B^{(m)}(t)|\,d t+\\ +
\frac{c}{r^{\nu+m-1}}\left|\intl_N^\infty t^{\nu}
J_{\nu+m-1}(rt)\mathcal B^{(m)}(t)\,d t\right|+\\
+\sum\limits_{k=1}^{m-1}\frac{c_k}{r^{\nu+m-1}}\intl_N^\infty t^{\nu+k-m}
\left|J_{\nu+m-1}(rt)\mathcal B^{(k)}(t)\right|\,d t.
\end{multline}
The function $\mathcal B^{(m)}(t)$ is bounded on $[N-\dl,N]$ so that the
estimation of the first term is obvious.  Since $|J_{\nu+m-1}(rt)|\leqq
\frac{c}{\sqrt{rt}}$ and $|\mathcal B^{(k)}(t)|\leqq c t^{-\xi}$,
$\xi=\min\{k,\nu+1/2\}$, see \eqref{pronew}, the last sum in \eqref{a(r)appl}
is estimated by $\frac{c}{r^{\nu+m-\frac{3}{2}}}$.

It remains to estimate the second term. We make use of the asymptotics in
\eqref{pronew} again and obtain
\begin{equation}\label{final} |a_3(r)|\leqq
\frac{1}{r^{\nu+m-\frac{3}{2}}} \left(c+ \sum\limits_{i=1}^
{\frac{\ell}{2}-1}c_i\left|\intl_N^\infty J_{\nu+m-1}(rt)J_{\nu-2}(\ell_i
t)\, d t\right|\right).
\end{equation} It is known that the last integral
converges when $\nu+\frac{m}{2}>1$ and
$$\intl_0^\infty J_{\nu+m-1}(rt)J_{\nu-2}(\ell_i
t)\, d t= \frac{\gm}{r^{\nu-1}}, \ \quad r>\ell_i,$$ where $\gm$ is a
constant $\left(\gm=
\ell_i^{\nu-2}\frac{\Gm\left(\nu-1+\frac{m}{2}\right)}{\Gm\left(\nu-1\right)
\Gm\left(1+\frac{m}{2}\right)}\right)$, see \cite[formula 6.512.1]{graryz}.
Then from \eqref{final} we get \eqref{rapidly}.
\end{proof}

\subsection{Proof of Theorem \ref{main}}\label{smain}

\begin{proof} Assume that the limit $\lim_{\varepsilon \to 0+} \mathbb
D^\alpha_\varepsilon f$ in \eqref{principal} exists. We  express
$\frac{1}{\ve^\al}(I-P_\varepsilon)^\alpha f $ via
$\varphi_\varepsilon(x):=\mathbb D^\alpha_\varepsilon f(x)$ in
``averaging" terms:
\begin{equation}\label{5.1}
\frac{1}{\varepsilon^\alpha}(I-P_\varepsilon)^\alpha
f(x)=c\varphi_\varepsilon(x)+\frac{1}{\varepsilon^n} \int_{\mathbb R^n}
a\left ( \frac{x-y}{\varepsilon} \right) \varphi_\varepsilon(y)\;d y
\end{equation}
where $a (x)\in L^1(\mathbb R^n)$ and $a(x)$ is the inverse
Fourier transform of the function $A(x)-A(\infty),\; c=A(\infty)$
and
\begin{equation}\label{uu}
  c+\int_{\mathbb R^n} a(y)\;d y=1.
\end{equation}

Representation \eqref{5.1}-\eqref{uu}, verified via Fourier
transforms:
\begin{equation}\label{5.4}
   F \left(\frac{1}{\varepsilon^\alpha}(I-P_\varepsilon)^\alpha
f \right)(x)= A(\varepsilon x) F(\mathbb D^\alpha_\varepsilon
f)(x)
\end{equation}
was given in \cite{hypsam}  for the case of constant $p$ and thus
valid  for $f \in C^\infty_0(\mathbb R^n)$. Then \eqref{5.1} holds
for $f\in L^{p(\cdot)}(\mathbb{R}^n)$ by the continuity of the
operators on the left-hand and right-hand sides in
$L^{p(\cdot)}(\mathbb{R}^n)$; for the left-hand side see
\eqref{2.6}, while the boundedness of the convolution operator on
the right-hand side follows from the fact that the Fourier
transform of its kernel is a Fourier $p(\cdot)$-multiplier by
Lemma \ref{amultiplier}.

With  $\varphi=\mathbb D^\alpha f =\lim_{\ve\to 0}\varphi_\ve$,
from \eqref{5.4} we have
\begin{equation}\label{new}
   \left \| \frac{1}{\varepsilon^\alpha}(I-P_\varepsilon)^\alpha
f -\varphi  \right \|_{p(\cdot)} \leqq C  \| \varphi_\varepsilon-
\varphi \|_{p(\cdot)} + \| T_{\ve} \varphi -\varphi \|_{p(\cdot)}
\end{equation}
where $T_\ve$ is the operator defined by the right hand side of
\eqref{5.1}, that is, the operator generated by the Fourier
 multiplier $A(\ve x)$. The right-hand side of \eqref{new} tends
 to zero, the first term by the definition of $\vi$, the second
 one by
  Lemma \ref{12}, since  $A$ satisfies the Mikhlin condition as
  proved in
  Lemma \ref{amultiplier}.

Suppose now that $\lim_{\varepsilon\to 0+}
\frac{1}{\varepsilon^\alpha}(I-P_\varepsilon)^\alpha f$ exists in
$L^{p(\cdot)}(\mathbb{R}^n)$. By \eqref{5.4} we have
\begin{equation}\label{obratno}
 F(\mathbb D^\alpha_\varepsilon f)(x)=B(\varepsilon x)F \left(
\frac{(I-P_\varepsilon)^\alpha f}{\varepsilon^\alpha}\right)(x)
\end{equation}
for $f\in C^\infty_0(\mathbb R^n)$, where $B(x)=1/A(x)$. Since
$B(x)$ is also a Fourier multiplier by Theorem \ref{amultiplier}, the
arguments are the same as in above the passage from $\lim_{\ve\to
0}\mathbb{D}_\ve^\al f$ to $\lim_{\varepsilon\to 0+}
\frac{1}{\varepsilon^\alpha}(I-P_\varepsilon)^\alpha f.$
\end{proof}

\subsection{Proof of Theorem \ref{main2}}

 $\left.\right.$

\vs{2mm} The ``only if" part of Theorem \ref{main2} is a consequence of
Theorem \ref{main}.

To prove the ``if" part, suppose that \eqref{vdnh} holds. From \eqref{obratno}
we obtain that
\[
\|\mathbb{D}_\ve^\al f\|_{p(\cdot)} \leqq C
\left\|\frac{1}{\ve^\al}(I-P_\ve)^\al f \right \|_{p(\cdot)}\leqq C,
\]
 since $A(\ve x)$ is
a uniform Fourier multiplier in $L^{p(\cdot)}(\rn)$ by Theorem \ref{main1}
and Lemma \ref{dilation}. To finish the proof,  it remains to  refer to
Theorems \ref{rieszrange1} and \ref{alm}.

\subsection{Proof of Theorem \ref{main1}}\label{smain1}

$\left.\right.$

\vs{3mm} Theorem \ref{main1} is an immediate consequence of Theorem
\ref{main} and  Propositions \ref{besselcharacterization} and \ref{alm}.

\section{Appendix}

The following recurrence formula for the $k$-th derivative of the quotient is valid
(see, e.g., \cite{xenophontos,gerrish})
\begin{equation}\label{quo}
\left( \frac{u}{v}\right)^{(k)}=\frac{1}{v}\left(u^{(k)}-k!\sum_{j=1}^k
\frac{v^{(k+1-j)}}{(k+1-j)!}\frac{\left(\frac{u}{v}\right)^{(j-1)}}{(j-1)!} \right).
\end{equation}

 By means of this formula, by induction it is not hard to check the validity of the
 following formula for the $k$-th derivative
of the fraction $\frac{1}{v(x)}, x\in \mathbb{R}^1$:
\begin{equation} \label{nth}
\frac{d^k}{d x^k}\left(\frac{1}{v}\right)=\frac{v^{(k)}}{v^2}+
\sum_{j=1}^{k-1}\frac{A_j(D)v}{v^{j+1}} +(-1)^kk!\frac{(v^\prime)^k}{v^{k+1}}
\end{equation}
where the differential operators $A_{j,k}(D)$ of order $ k$ have
the form
$$A_{j,k}(D)v= a_j\left[\frac{d^{m_j} v}{d x^{m_j}}\right]^{\al_j} \left[\frac{d^{n_j}v}
{d x^{n_j}}\right]^{\bt_j} + b_j\left[\frac{d^{p_j}v}{d
x^{p_j}}\right]^{\gm_j} \left[\frac{d^{q_j}v} {d x^{q_j}}\right]^{\dl_j}$$
where $a_j$ and $b_j$ are constants, $m_j,n_j,p_j,q_j, \al_j,\bt_j,\gm_j,
\dl_j$ are integers in $[1,k-1]$ such that
\begin{equation}\label{ultima}
m_j\al_j+n_j\bt_j = p_j\gm_j+q_j\dl_j= k.
\end{equation}

\end{document}